\title{Rapid Grassmannian Averaging \\with Chebyshev Polynomials}
\author{Brighton Ancelin, Alex Saad-Falcon, Kason Ancelin, \& Justin Romberg \\
Machine Learning Center at Georgia Tech (ML@GT) \\
Georgia Institute of Technology \\
Atlanta, GA 30332, USA \\
\texttt{\{brighton.ancelin,alexsaadfalcon,kancelin3\}@gatech.edu}\\\texttt{jrom@ece.gatech.edu}
}
\begin{document}

\maketitle

\begin{abstract}
	We propose new algorithms to efficiently average a collection of points on a Grassmannian manifold in both the centralized and decentralized settings. Grassmannian points are used ubiquitously in machine learning, computer vision, and signal processing to represent data through (often low-dimensional) subspaces. While averaging these points is crucial to many tasks (especially in the decentralized setting), existing methods unfortunately remain computationally expensive due to the non-Euclidean geometry of the manifold. Our proposed algorithms, Rapid Grassmannian Averaging (RGrAv) and Decentralized Rapid Grassmannian Averaging (DRGrAv), overcome this challenge by leveraging the spectral structure of the problem to rapidly compute an average using only small matrix multiplications and QR factorizations. We provide a theoretical guarantee of optimality and present numerical experiments which demonstrate that our algorithms outperform state-of-the-art methods in providing high accuracy solutions in minimal time. Additional experiments showcase the versatility of our algorithms to tasks such as $K$-means clustering on video motion data, establishing RGrAv and DRGrAv as powerful tools for generic Grassmannian averaging.\footnote{All code available at \href{https://github.com/brightonanc/rgrav}{\texttt{https://github.com/brightonanc/rgrav}}}
\end{abstract}

\section{Introduction}

Grassmannian manifolds, which represent sets of $K$-dimensional linear subspaces of $N$-dimensional spaces \citep{edelman1998geometry}, have been used extensively in machine learning \citep{huang2018building, zhang2018grassmannian, slama2015accurate}, computer vision \citep{harandi2013dictionary, lui2008grassmann, turaga2011statistical}, and signal processing \citep{gallivan2003efficient, mondal2007quantization, xu2008compressed}. Applications include Principal Component Analysis (PCA) \citep{jolliffe2016principal}, low-rank matrix completion \citep{keshavan2010matrix}, multi-task feature learning \citep{gossip}, clustering \citep{gruber2006grassmann}, array processing \citep{love2003grassmannian}, and distance metric learning \citep{meyer2009subspace}.

An essential primitive operation is finding an average of a collection of points on the manifold. There are several distinct yet reasonable definitions for an average of points on a Grassmannian \citep{marrinan_finding_2014}. Arguably the most natural analog of the Euclidean mean for points on a Riemannian manifold (such as a Grassmannian) is the Fr\'echet (or Karcher) mean, defined as the point which minimizes the sum of squared distances to all sample points \citep{frechet1948elements}. Unfortunately, the Fr\'echet mean rarely admits a closed form solution, instead necessitating approximation via iterative algorithms \citep{jeuris2012survey}. Such algorithms are often computationally expensive, scale poorly with dimension, and are not easily decentralized. 

The induced arithmetic mean (IAM) is an alternative manifold average computed by first determining the Euclidean mean of the manifold sample points once embedded ``naturally" in some Euclidean space and subsequently projecting this Euclidean mean ``naturally" back onto the manifold. For Grassmannian manifolds, the standard embedding is the set of projection matrices and the standard projection operation is simply the closest matrix by Frobenius distance \citep{sarlette2009consensus}. This manifold average may be computed much more efficiently in practice and lends itself well to decentralization as the Euclidean mean may be computed by average consensus \citep{average_consensus} (subsequent projections may be applied locally thereafter).

As the dimensionality of data grows, it becomes increasingly important to consider decentralized algorithms \citep{8340193} as data might be spread across many machines and only be accessible for processing via distributed algorithms \citep{beltran2023decentralized}. While a central server is sometimes employed in this regime, it is similarly common for the use of such a server to be infeasible or simply inefficient when compared to fully decentralized approaches \citep{Sun2021Decentralized, Feller2012A}.

We propose a novel method to efficiently compute the IAM of a collection of points on a Grassmannian manifold. Our method is highly amenable to decentralization, meaning it can be readily deployed to multi-agent systems or used in data centers operating on big data. Our algorithms operate similarly to the famous power method, with the distinction that Chebyshev polynomials are employed to leverage a ``dual-banded" property of the problem in order to achieve never-before-seen efficiency in computation and communication. We demonstrate merit through a theoretical guarantee on the optimality of our approach among a class of algorithms, synthetic numerical experiments comparing our algorithms against state-of-the-art, and experiments on real-world problems showcasing the versatility of our algorithms.

\section{Related Work}

The problem of computing an appropriate average on specific manifolds has been investigated for many different manifolds, e.g., $S^N$, $\SO{N}$, $\St{N, K}$, even $\Gr{N, K}$ \citep{downs1972orientation, buss2001spherical, galperin1993concept, hueper2004karcher, absil2004riemannian, moakher2002means, fiori2014tangent, yun_noisy_2018}. Focus is often given to the Fr\'echet mean \citep{chakraborty2020intrinsic, cheng2016recursive, le2001locating}, however alternatives are becoming increasingly more popular \citep{fletcher2008robust, fletcher2009geometric, arnaudon2012medians, marrinan_finding_2014, chakraborty_recursive_2015, lee2024huber}. Similarly, the problem of consensus on a manifold in a multi-agent setting has been explored in works such as \citet{sepulchre2011consensus, tron2012riemannian}.

There have been several algorithms proposed for decentralized optimization on manifolds such as Grassmannians. \citet{sarlette2009consensus} proposes a decentralized gradient-based algorithm to solve the problem of computing the IAM for connected compact homogeneous manifolds, e.g. $\SO{N}$ and $\Gr{N, K}$. \citet{dprgd_dprgt} proposes two decentralized gradient-based algorithms for general optimization problems on Riemannian manifolds. \citet{gossip} proposes a decentralized gradient-based gossip algorithm for general optimization problems on a Grassmannian manifold. Similar works include \citet{chen2021decentralized, chen2023decentralized, zhang2017decentralized}.

A problem which is closely related to Grassmannian averaging is that of PCA. \citet{deepca} proposes the DeEPCA algorithm to solve the decentralized PCA problem. While computing a Grassmannian average is not the intended application of DeEPCA, it may be adapted to this task fairly naturally. \citet{gang2021distributed, gang2022linearly, froelicher2023scalable} similarly propose distributed algorithms for PCA; for a more comprehensive review of this field, see \citet{wu2018review}.

\section{Background}\label{sec:background}

\subsection{Averaging Subspaces}

Given a collection of $M$ subspaces, our goal is to determine the average subspace as efficiently as possible. We choose to use the standard IAM definition of ``average" \citep{sarlette2009consensus} as it leads to what we believe is the most efficient algorithm. Formally, let $\St{N, K} := \c{\mU \in \R^{N \times K} ~|~ \mU^\T \mU = \mId_K}$ be the set of $N \times K$ Stiefel matrices where $N \geq K$ and let $\Gr{N, K} := \c{\s{\mU} ~|~ \mU \in \St{N, K}}$ (where the equivalence is defined as $\s{\mU} := \c{\mU \mQ ~|~ \mQ \in \St{K, K}}$) be the Grassmannian representing the set of all $K$-dimensional subspaces of $\R^N$. The average of our collection $\c{\s{\mU_m}}_{m=1}^M$ is then denoted $\s{\bar{\mU}}$ and defined by the following optimization problem
\begin{equation}\label{eq:iam}
	\s{\bar{\mU}} := \argmin_{\s{\mU} \in \Gr{N, K}} \norm[\rF]{\frac1M \sum_{m=1}^M \mU_m \mU_m^\T - \mU \mU^\T}^2
\end{equation}

\Cref{eq:iam} may be manipulated algebraically to be interpreted equivalently in terms of the eigenvectors of $\bar{\mP} := \frac1M \sum_{m=1}^M \mU_m \mU_m^\T$. Let
\[
	\bar{\mP} = \tilde{\mV} \tilde{\mLambda} \tilde{\mV}^\T = \begin{bmatrix} \mV & \mV_\perp \end{bmatrix} \begin{bmatrix} \mLambda & \mzero \\ \mzero & \mLambda_\perp \end{bmatrix} \begin{bmatrix} \mV^\T \\ \mV_\perp^\T \end{bmatrix}
\]
denote an eigendecomposition where $\mV \in \St{N, K}$ and the entries of $\tilde{\mLambda}$ are non-increasing. Assuming $\lambda_K > \lambda_{K+1}$ (where $\lambda_k$ denotes the $k$th largest eigenvalue of $\bar{\mP}$), it can be shown that the solution to \cref{eq:iam} is precisely $\s{\bar{\mU}} = \s{\mV}$. Consequently, determining the span of the leading $K$ eigenvectors of $\bar{\mP}$ is tantamount to solving \cref{eq:iam}, which is the perspective we will later use to motivate our algorithms.

As we continue to discuss this problem, it is informative to keep in mind the following properties. The eigenvalues of $\bar{\mP}$ are conveniently bounded by $\mzero \preceq \tilde{\mLambda} \preceq \mId_N$ and satisfy $\tr{\tilde{\mLambda}} = K$, which may be determined by inspection. As a result, $\lambda_K, \lambda_{K+1}$ are bounded as $\frac1{N-K+1} \leq \lambda_K \leq 1$ and $0 \leq \lambda_{K+1} \leq \frac{K}{K+1}$. For convenience, we occasionally abuse notation to let $\s{\mX}$ denote the Grassmannian equivalence class for the span of the columns of arbitrary (not necessarily Stiefel) matrix $\mX \in \R^{N \times K}$.

\subsection{Averaging Subspaces in a Decentralized Network}\label{sec:averaging_subspaces_decentralized}

Decentralized or distributed optimization problems arise in numerous real-world scenarios where centralized approaches are impractical or undesirable. These problems are characterized by using information spread across multiple agents or nodes in a network. Motivating reasons include privacy, communication constraints, data storage limitations, scalability, etc. 

In the context of this paper, consider the setting where there are $M$ agents, each holding a subspace $\s{\mU_m}$, connected by a some undirected communication graph $\cG$. We then want each agent to learn the solution $\s{\bar{\mU}}$ to \cref{eq:iam} under the restriction that each agent only communicate with their neighbors in $\cG$.

Average consensus (AC) is a useful primitive in decentralized optimization to quickly approximate the average of real numbers in a decentralized manner \citep{average_consensus}. Unfortunately, the non-convex manifold structure of $\Gr{N, K}$ precludes us from efficiently applying AC directly to solve \cref{eq:iam}. While we could have each agent compute the matrices $\mU_m \mU_m^\T$ and then use AC to approximate $\bar{\mP}$, this would incur a communication cost of $\cO\p{N^2}$ (the size of $\bar{\mP}$) which may be much larger than $\cO\p{N K}$. We could instead use AC to average the matrices $\mU_m$ with preferable communication cost $\cO\p{N K}$, however the arbitrary choice of representative Stiefel matrix $\mU_m$ from the Grassmannian equivalence class $\s{\mU_m}$ makes this approach ill-posed.

In order to achieve the $\cO\p{N K}$ communication cost without being ill-posed, one can have all agents compute $\mU_m \mU_m^\T \mX$ and then use AC to approximate $\bar{\mP} \mX$, where $\mX \in \R^{N \times K}$ is some matrix agreed upon by all agents a priori. In practice, the requirement that all agents agree upon $\mX$ might be overly strict; in many scenarios, it often suffices to have each agent $m$ have instance $\mX_m$ which are all approximately equal (i.e. there exists some $\mX$ for which $\mX_m \approx \mX$ for all $m \in \s{M}$).

After one iteration of an algorithm, each agent will have some local approximation of the quantity $\bar{\mP} \mX$. If the matrix $\mX$ is retained in memory for each agent, then AC may be applied to a linear combination of the $\bar{\mP} \mX$ approximations and $\mX$ to approximate some quantity $\bar{\mP} \p{a \bar{\mP} \mX + b \mX} = \p{a \bar{\mP}^2 + b \bar{\mP}} \mX$. Applying this logic recursively reveals that such an algorithm can approximate $f_t\p{\bar{\mP}} \mX$ after $t$ iterations, where $f_t \in \cP_t$ is a $t$th order polynomial; in \Cref{sec:methods} we will consider more thoroughly these polynomials and how they can translate to desirable algorithms.

Gradient tracking is a famous technique in decentralized optimization that improves upon the convergence rate of AC-based methods \citep{EXTRA, xu2015augmented, qu2017harnessing, dprgd_dprgt, deepca}. In essence, it sets up a recursion using standard AC whose fixed point satisfies both a consensus condition (meaning all agents agree) and a stationarity condition (meaning the solution is locally optimal). We will later employ a form of gradient tracking over quantities of the form $f_t\p{\bar{\mP}} \mX$ for our decentralized algorithms.

\subsection{The Power Method}\label{sec:power_method}

The power method is a classical algorithm to estimate the leading eigenspace of a positive semidefinite matrix $\mA \in \R^{N \times N}$. A single power iteration applies the matrix $\mA$ and orthonormalizes (for numerical stability) the result, e.g.
\[
	\mU^{(t)} = \text{QR}\p{\mA \mU^{(t-1)}}
\]
The power method loop may be unrolled to reveal the following form
\[
	\mU^{(t)}
	= \text{QR}\p{\underbrace{\mA\mA \cdots \mA\mA}_{\text{t times}}\mU^{(0)}}
	= \text{QR}\p{\mA^t \mU^{(0)}}
\]
Let $\mV_\star \in \St{N, K}$ be a basis for the leading $K$ eigenvectors of $\mA$.
For random initialization $\mU^{(0)} \in \R^{N \times K}$, the span of $\mU^{(t)}$ converges to the span of the $\mV_\star$ provided $\rank{\mV_\star^\T \mU^{(0)}} = K$ \citep[Chapter 8.2]{golub_matrix_2013}.

At iteration $t$, the power method effectively applies the function $f_t(\lambda) = \lambda^t$ to each eigenvalue of $\mA$. Consider for example the case where $\lambda_K\p{\mA} = 1$ and $\lambda_K\p{\mA} - \lambda_{K+1}\p{\mA} > 0$. As $t$ increases, the ratio between trailing and leading eigenvalues of $\mA^t$ shrinks exponentially; formally, for $1 \leq k \leq K$ and $K+1 \leq \ell \leq N$ we have the following
\[
	\frac{\lambda_\ell\p{\mA^t}}{\lambda_k\p{\mA^t}}
	\leq \frac{\lambda_{K+1}\p{\mA^t}}{\lambda_K\p{\mA^t}}
	= \p{\frac{\lambda_{K+1}\p{\mA}}{\lambda_K\p{\mA}}}^t
	< 1
\]
While the power method works well, large values of $\lambda_{K+1}$ can slow convergence. We will later show how polynomials other than $\lambda^t$ can overcome this shortcoming and use them in our algorithms.

\section{Methods}\label{sec:methods}

\subsection{Motivation}

The goal of the RGrAv algorithms is to solve \cref{eq:iam} as efficiently as possible. Recall from \Cref{sec:background} that determining the span of the $K$ leading eigenvectors of $\bar{\mP}$ is tantamount to solving \cref{eq:iam}. Motivated by the decentralized setting and the power method, we restrict our consideration to iterative algorithms which after $t$ iterations compute $f_t\p{\bar{\mP}} \bar{\mU}^{(0)}$ for some $t$th-order polynomial $f_t$ and initial estimate $\bar{\mU}^{(0)}$ (see \Cref{sec:averaging_subspaces_decentralized}). 

We consider the case where the spectrum of $\bar{\mP}$ is dual-banded, i.e. there exists some $0 < \alpha < \beta < 1$ such that $\mLambda_\perp \preceq \alpha \mId_{N-K}$, $\beta \mId_K \preceq \mLambda$, and $\beta - \alpha \gg 0$ (e.g. $\beta - \alpha = \frac13$). This situation can arise, for instance, when points are normally distributed on the manifold (see \Cref{sec:decentralized_grassmannian_averaging}). For simplicity, we refer to the intervals $\s{0, \alpha}$ and $\s{\beta, 1}$ as the ``stop-band" and ``pass-band", respectively. Similar to the power method, we would like our polynomial $f_t$ to decrease the ratio between eigenvalues in the stop-band relative to eigenvalues in the pass-band. However, unlike the power method, knowledge of this dual-banded structure (even heuristically) allows us to choose polynomials which optimize the worst case value of this ratio criteria. Leveraging this spectral structure is how we will choose our optimal polynomials $f_t^\star$ (see \Cref{thm:main}).

There is, however, an important consideration for high-dimensional data. In the case where $N \gg M K$ we are guaranteed that the nullspace of $\bar{\mP}$ is (at least $N - M K$) high-dimensional by rank subadditivity, meaning there will be a cluster of eigenvalues at 0. For this reason, we will constrain our polynomials $f_t$ to always satisfy $f_t(0) = 0$. Since the ratio criteria of \Cref{thm:main} is invariant to scaling of $f_t$, we provide one final constraint of $f_t(1) = 1$ simply for uniqueness of solution and numerical stability.

\begin{theorem}\label{thm:main}
	For $t \geq 1$, the minimization problem
	\begin{equation}\label{eq:optimal_band}
		\minimize_{f_t \in \cP_t'} \frac{\max_{\lambda \in \s{0, \alpha}} \abs{f_t\p{\lambda}}}{\min_{\lambda \in \s{\beta, 1}} \abs{f_t\p{\lambda}}},
	\end{equation}
	where $\cP_t'$ is the set of $t$th order polynomials such that $f_t(0) = 0$ and $f_t(1) = 1$, is solved by
	\begin{equation*}
		f_t^\star\p{\lambda} = \prod_{s=0}^{t-1} \frac{\lambda - r_{s,t}}{1 - r_{s,t}}, \qquad r_{s,t} := \alpha \frac{\cos\p{\frac{\pi \p{s + 1/2}}{t}} + \cos\p{\frac{\pi}{2 t}}}{1 + \cos\p{\frac{\pi}{2 t}}}
	\end{equation*}
	which is a modification of a Chebyshev polynomial of the first kind.\footnote{A proof may be found in \Cref{apx:thm_main}}
\end{theorem}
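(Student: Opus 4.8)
The plan is to recognize $f_t^\star$ as an affinely rescaled Chebyshev polynomial of the first kind, and then to prove optimality by the classical equioscillation/root-counting technique. First I would observe that the numerator factors $\cos\p{\frac{\pi(s+1/2)}{t}}$ appearing in $r_{s,t}$ are exactly the roots $x_s$ of the degree-$t$ Chebyshev polynomial $T_t$, so that $r_{s,t} = \alpha\frac{x_s+c}{1+c}$ with $c := \cos\p{\frac{\pi}{2t}}$ is the image of $x_s$ under the affine map $x \mapsto \alpha\frac{x+c}{1+c}$. This map sends the smallest root $x_{t-1}=-c$ to $0$ and the point $x=1$ to $\alpha$; the choice of the ``modifier'' $c$ (rather than the naive $1$ one would use to map $\s{-1,1}$ onto $\s{0,\alpha}$) is precisely what forces one root to land at $\lambda=0$. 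Feasibility then follows at once: $f_t^\star$ has degree $t$, $f_t^\star(1)=1$ by the product normalization, and $f_t^\star(0)=0$ because $r_{t-1,t}=0$. Writing $x(\lambda) := \frac{(1+c)\lambda}{\alpha}-c$ for the inverse map, a short computation collapses the product into $f_t^\star(\lambda) = T_t(x(\lambda))/T_t(x(1))$.

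Next I would read off the behavior on each band. On the stop-band $\s{0,\alpha}$ we have $x(\lambda)\in\s{-c,1}\subset\s{-1,1}$, so $\abs{T_t}\le 1$ and hence $\abs{f_t^\star}\le M^\star := 1/T_t(x(1))$, with equality exactly at the $t$ Chebyshev extrema lying in $\s{-c,1}$, where $f_t^\star$ alternates in sign; in particular $f_t^\star(\alpha)=+M^\star$. On the pass-band $\s{\beta,1}$ we have $x(\lambda)>1$ (since $\beta>\alpha$), where $T_t$ is positive and strictly increasing, so $f_t^\star$ is increasing and its minimum over $\s{\beta,1}$ is $m^\star := f_t^\star(\beta) = T_t(x(\beta))/T_t(x(1))$. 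This yields the optimal objective value $R^\star = M^\star/m^\star = 1/T_t(x(\beta))$.

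For optimality I would argue by contradiction with a sign-counting perturbation. Since the objective (write $R$ for the ratio in \cref{eq:optimal_band}) is scale invariant and only $f_t(0)=0$ is an essential constraint ($f_t(1)=1$ merely fixes the scale), given any competitor $g$ with strictly smaller objective $R(g)<R^\star$ I may rescale it, and flip its sign if needed, so that $\min_{\s{\beta,1}} g = m^\star$ with $g>0$ on the pass-band; the strict inequality then forces $\abs{g}<M^\star$ throughout $\s{0,\alpha}$. Consider $d := f_t^\star - g$. At each of the $t$ equioscillation points $\eta_1<\cdots<\eta_t=\alpha$ the value $\abs{f_t^\star}=M^\star$ strictly dominates $\abs{g}$, so $d$ inherits the alternating sign of $f_t^\star$, giving at least $t-1$ roots strictly inside $\p{\eta_1,\alpha}$. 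Two further distinct roots come from the side constraints: $d(0)=0$ (both polynomials vanish at $0$, and $0<\eta_1$), and since $d(\alpha)=M^\star-g(\alpha)>0$ while $d(\beta)=m^\star-g(\beta)\le 0$, there is a root in $\p{\alpha,\beta}$. Thus $d$ has at least $t+1$ distinct roots while $\deg d\le t$, forcing $d\equiv 0$ and contradicting $R(g)<R^\star$.

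The main obstacle is this last step, and specifically getting the root count to exceed $t$. Bare equioscillation of a degree-$t$ polynomial furnishes only $t-1$ interior sign changes, which cannot contradict $\deg d\le t$; the crux is realizing that the two constraints of the problem each contribute exactly one extra root --- the mandated zero at $\lambda=0$ and a sign-forced crossing in the gap $\p{\alpha,\beta}$ --- pushing the total to $t+1$. Care is also needed to verify these $t+1$ roots are genuinely distinct (the interior sign-change roots lie strictly below $\alpha$, the gap root strictly above it, and the constraint root at $0$ strictly below $\eta_1$), and to dismiss up front any $g$ that vanishes or changes sign on the pass-band, since such a $g$ makes the ratio infinite and cannot beat $R^\star$.
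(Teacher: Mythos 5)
Your proof is correct, and it takes a genuinely different route from the paper's. The paper (\Cref{apx:thm_main}) works by deriving necessary conditions on an \emph{arbitrary} minimizer of \cref{eq:optimal_band}: \Cref{lem:root_loc} shows a solution has no roots in $[\alpha,1]$, \Cref{lem:equi_lem} shows a solution must be $t$-equioscillatory on $[0,\alpha]$ (via a fairly involved construction of an explicit correction polynomial $r$ that strictly improves any non-equioscillatory candidate), and these facts reduce the problem to the restricted class $\cP_t''$ with normalization $f(\beta)=1$, which \Cref{lem:cheby_soln} solves by root counting. You instead argue \emph{directly} about the candidate: you identify $f_t^\star(\lambda) = T_t(x(\lambda))/T_t(x(1))$ under the affine map $x(\lambda) = \frac{(1+c)\lambda}{\alpha} - c$ with $c = \cos\p{\frac{\pi}{2t}}$ (your observation that $c$ is chosen so the smallest Chebyshev root lands at $\lambda=0$ is exactly the right reading of the construction), read off its $t$ alternation points on $[0,\alpha]$ and its monotonicity on $[\beta,1]$, and defeat any strictly better competitor $g$ with a single degree argument on $d = f_t^\star - g$: $t-1$ sign-change roots between alternation points, the mandated root at $0 < \eta_1$, and an intermediate-value crossing in $(\alpha,\beta]$ forced by $d(\alpha) > 0$ and $d(\beta) \le 0$ --- this gap root playing the role of the paper's root at $\beta$, which there comes for free from the shared normalization $g(\beta)=1$. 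Your route buys two things: it never presupposes that a minimizer of \cref{eq:optimal_band} is attained (the paper's \Cref{lem:equi_lem} is a statement about ``a solution,'' so the paper's chain implicitly assumes existence), and it eliminates the perturbation machinery entirely; the trade-off is that it says nothing structural about other potential optima, whereas the paper's lemmas establish equioscillation as necessary for \emph{any} solution. Your bookkeeping at the delicate points is sound: the sign-flip and rescaling are licensed by scale invariance of the ratio (with $f_t(1)=1$ only pinning a representative), competitors vanishing or changing sign on $[\beta,1]$ are correctly dismissed as having infinite objective, and the $t+1$ roots are genuinely distinct since the smallest extremum satisfies $\eta_1 > 0$ (it maps from $-\cos\p{\frac{\pi}{t}} > -c$) and the gap root strictly exceeds $\alpha$.
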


\begin{figure}[ht]
	\centering
	\includegraphics[width=0.9\linewidth]{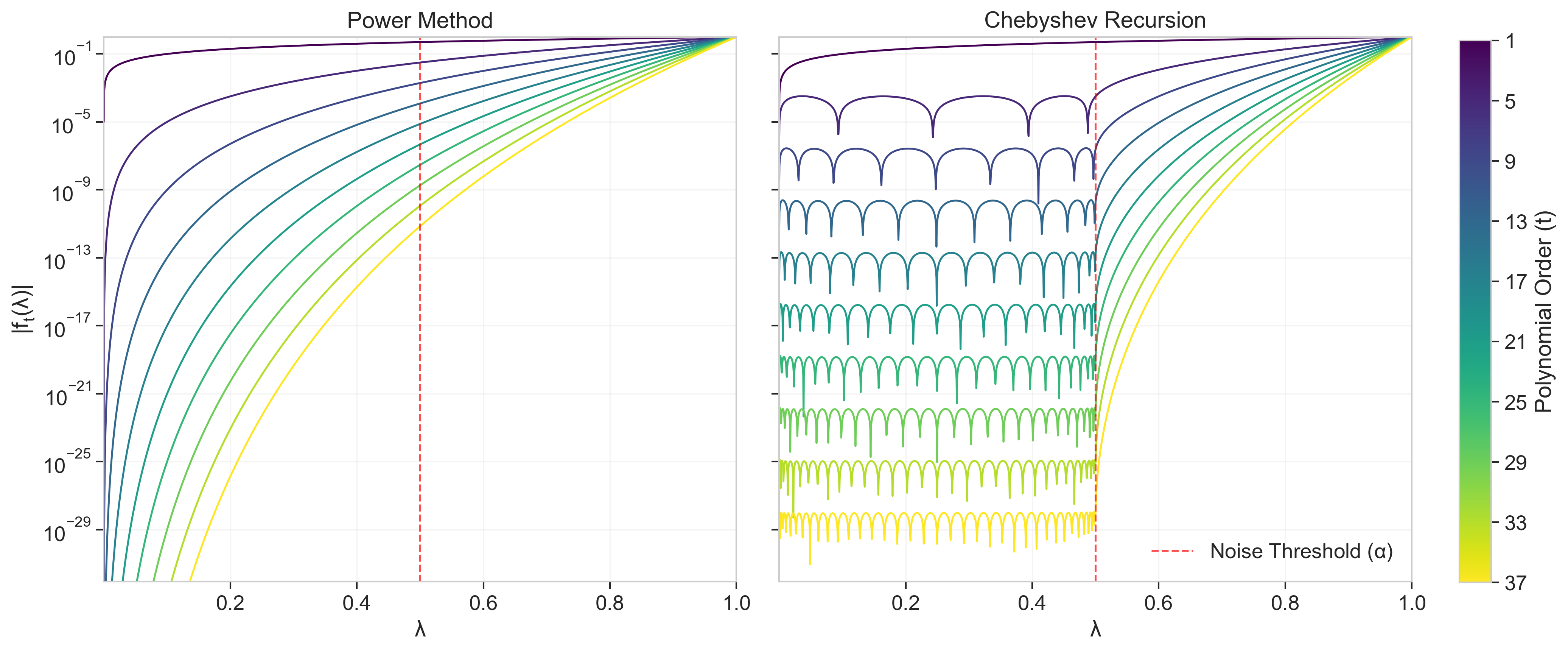}
	\caption{\sl \small A visual comparison between the power method and our method. Each method's corresponding $t$th-order polynomial is applied to the eigenvalues $\lambda$ in the domain $\s{0,1}$. The Chebyshev recursion with threshold parameter $\alpha=0.5$ results in the polynomial oscillations being reduced and flattened in the range $[0, \alpha]$. }
	\label{fig:chebyshev}
\end{figure}

\subsection{RGrAv Algorithms}

The polynomial $f_t^\star$ may be exactly implemented by iteratively multiplying each factor in the product given in \Cref{thm:main}; the ``finite" variants of the RGrAv algorithms (\Cref{alg:frgrav,alg:fdrgrav}) do precisely this. While this approach may be acceptable when the number of iterations $t$ is known in advance, intermediate solutions can be quite sub-optimal.

Ideally, one would be able to describe $f_t^\star$ in terms of only a constant number of previous $f_s^\star$, e.g. $f_{t-1}^\star, f_{t-2}^\star$. This would yield an efficient algorithm with optimal intermediate solutions whose memory/compute costs do not grow as $t \to \infty$. Unfortunately, this is not the case; fortunately, $f_t^\star$ \textit{is well-approximated} in terms of $f_{t-1}^\star, f_{t-2}^\star$. For $t \geq 2$, coefficients $a_t, b_t, c_t$ are chosen such that
\[
	\tilde{f}_t^\star\p{\lambda} = a_t \p{\p{\lambda + b_t} f_{t-1}\p{\lambda} + c_t f_{t-2}\p{\lambda}}
\]
matches $f_t^\star\p{\lambda}$ in its leading three terms, i.e. $f_t^\star\p{\lambda} - \tilde{f}_t^\star\p{\lambda} \in \cP_{t-3}$ (see \Cref{alg:chebyshev_coefficients}); the ``asymptotic" variants of the RGrAv algorithms (\Cref{alg:argrav,alg:adrgrav}) use this $\tilde{f}_t^\star$.

As discussed in \Cref{sec:power_method}, orthonormalization must occur periodically for numerical stability. To minimize the frequency of the orthonormalization schedule, our algorithms effectively cache the operation of the most recent exact orthonormalization to a matrix $\mS$ and then efficiently approximate the orthonormalization procedure for iterations between the schedule by left-application of $\mS$. We choose the QR factorization for our orthonormalization method, however alternative methods would be acceptable.

In the centralized setting, the RGrAv algorithms need not worry about inaccuracy from average consensus and so the order of operations focuses on minimizing the number of computations performed. In the decentralized setting, the order of operations focuses on minimizing the error in the gradient tracking procedure. Additionally, in the decentralized setting one must take care to use a ``stable" orthonormalization method which will not change drastically with small perturbations in the input. We omit the nuances of numerical linear algebra that lead to this problem (see \citet[Chapter 5]{golub_matrix_2013} for more information) and simply present \Cref{alg:stable_qr}, which is a stable wrapper for any implementation of a possibly unstable QR factorization.

\begin{algorithm}[ht]
	\caption{\sl Asymptotic RGrAv (Rapid Grassmannian Averaging)}\label{alg:argrav}
	\textbf{Input:} $\alpha \in \intvlsp{0,1}$, $\bar{\mU}^{(0)}$, $\c{\mU_m}_{m=1}^M$
	\\ \textbf{Output:} $\bar{\mU}^{(T)}$
	\begin{algorithmic}
		\State $\mS = \mId_K$
		\For{$t=1,2,3,\dots$}
			\State $\mA_m^{(t)} \gets \mU_m \mU_m^\T \bar{\mU}^{(t-1)}$
			\State $\hat{\mA}^{(t)} \gets \frac1M \sum_{m=1}^M \mA_m^{(t)}$ \Comment{Power Iteration}
			\If{$t = 1$}
				\State $\hat{\mZ}^{(1)} \gets \hat{\mA}^{(1)}$ \Comment{$\s{\hat{\mZ}^{(1)}} = \s{\bar{\mP} \bar{\mU}^{(0)}}$}
			\Else
				\State $a_t, b_t, c_t \gets \text{ChebyshevCoefficients}\p{t, \alpha}$ \Comment{See \Cref{alg:chebyshev_coefficients}}
				\State $\hat{\mZ}^{(t)} \gets a_t \p{\hat{\mA}^{(t)} + b_t \bar{U}^{(t-1)} + c_t \bar{U}^{(t-2)}}$ \Comment{$\s{\hat{\mZ}^{(t)}} = \s{\tilde{f}_t^\star\p{\bar{\mP}} \bar{\mU}^{(0)}}$}
			\EndIf
			\If{$t$ is on the orthonormalization schedule}
				\State $\bar{\mU}^{(t)}, \mS \gets \text{StableQR}\p{\hat{\mZ}^{(t)}}$ \Comment{(Numerical Stability)}
			\Else
				\State $\bar{\mU}^{(t)} \gets \hat{\mZ}^{(t)} \mS$ \Comment{(Numerical Stability)}
			\EndIf
			\State $\bar{\mU}^{(t-1)} \gets \bar{\mU}^{(t-1)} \mS$ \Comment{(Numerical Stability)}
		\EndFor
	\end{algorithmic}
\end{algorithm}

\begin{algorithm}[ht]
	\caption{\sl Asymptotic DRGrAv (Decentralized Rapid Grassmannian Averaging)}\label{alg:adrgrav}
	\textbf{Input:} $\alpha \in \intvlsp{0,1}$, $\c{\bar{\mU}_m^{(0)}}_{m=1}^M$, $\c{\mU_m}_{m=1}^M$
	\\ \textbf{Output:} $\bar{\mU}^{(T)}$
	\begin{algorithmic}
		\State $\mS_m \gets \mId_K$
		\For{$t=1,2,3,\dots$}
			\State $\mA_m^{(t)} \gets \mU_m \mU_m^\T \bar{\mU}^{(t-1)}$ \Comment{Local Power Iteration}
			\If{$t = 1$}
				\State $\mY_m^{(1)} \gets \mA_m^{(1)}$ \Comment{$\s{\mY^{(1)}} \approx \s{\bar{\mP} \bar{\mU}^{(0)}}$}
				\State $\mZ_m^{(1)} \gets \mY_m^{(1)}$ \Comment{Gradient Tracking}
			\Else
				\State $a_t, b_t, c_t \gets \text{ChebyshevCoefficients}\p{t, \alpha}$ \Comment{See \Cref{alg:chebyshev_coefficients}}
				\State $\mY_m^{(t)} \gets a_t \p{\mA_m^{(t)} + b_t \bar{\mU}_m^{(t-1)} + c_t \bar{\mU}_m^{(t-2)}}$ \Comment{$\s{\mY^{(t)}} \approx \s{\tilde{f}_t^\star\p{\bar{\mP}} \bar{\mU}^{(0)}}$}
				\State $\mZ_m^{(t)} \gets \hat{\mZ}_m^{(t-1)} + \mY_m^{(t)} - \mY_m^{(t-1)}$ \Comment{Gradient Tracking}
			\EndIf
			\State $\hat{\mZ}_m^{(t)} = \text{AverageConsensus}\p{\mZ_m^{(t)}}$
			\If{$t$ is on the orthonormalization schedule}
				\State $\bar{\mU}_m^{(t)}, \mS_m \gets \text{StableQR}\p{\hat{\mZ}_m^{(t)}}$ \Comment{(Numerical Stability)}
			\Else
				\State $\bar{\mU}_m^{(t)} \gets \hat{\mZ}_m^{(t)} \mS_m$ \Comment{(Numerical Stability)}
			\EndIf
			\State $\bar{\mU}_m^{(t-1)} \gets \bar{\mU}_m^{(t-1)} \mS_m$ \Comment{(Numerical Stability)}
		\EndFor
	\end{algorithmic}
\end{algorithm}

\begin{algorithm}[ht]
	\caption{\sl StableQR}\label{alg:stable_qr}
	\textbf{Input:} $\hat{\mZ} \in \R^{N \times K}$
	\\ \textbf{Output:} $\mU \in \St{N, K}, \mS \in \R^{K \times K}$
	\begin{algorithmic}
		\State $\mQ, \mR \gets \text{QR}\p{\hat{\mZ}}$ \Comment{Arbitrary QR implementation}
		\State $\mD \gets \sgn{\Diag{\mR}}$
		\State $\mU \gets \mQ \mD$
		\State $\mS \gets \mR^{-1} \mD$ \Comment{Upper triangular inverse}
	\end{algorithmic}
\end{algorithm}

\section{Experiments}

\subsection{Decentralized Grassmannian Averaging}\label{sec:decentralized_grassmannian_averaging}

In these experiments, we consider the problem where a network of $M$ connected agents each has a local instance of a Grassmannian basis $\mU_m \in \St{N, K}$ and we would like for all agents to learn an average Grassmannian basis of all $\mU_m$ in a strictly decentralized manner (i.e. there is no central server, all communication is neighbor-to-neighbor). Our experiments had parameters $M=64$, $N=150$, $K=30$. To demonstrate the practicality of DRGrAv in both well-connected and sparse communication graphs, we performed experiments for two communication graphs: the hypercube graph and the cycle graph.

We compared DRGrAv to several alternative methods for Grassmannian averaging. Given below are the algorithms, their sources, and considerations for their tuning such that the comparison would be fair.

\textbf{DRGrAv} (this paper): Contrary to the following algorithms for which hyperparameters were chosen over large ranges to be empirically optimal, the hyperparameter $\alpha$ is chosen here heuristically as $0.15$. We also choose to use the approximate asymptotic variant of DRGrAv, and set the orthonormalization schedule to orthonormalize at every iteration (to match DeEPCA). These choices were made to demonstrate DRGrAv's competitive performance even when sub-optimally tuned.

\textbf{DeEPCA} \citep{deepca}: While this algorithm is not intended for decentralized Grassmannian averaging, we found that it could be easily adapted to this setting and gave competitive results: one simply substitutes $\mU_m \mU_m^\T$ for the paper's $\mA_j$. Also, for numerical stability, the paper's QR + SignAdjust procedure is replaced with the StableQR procedure.
	
\textbf{DPRGD/DPRGT} \citep{dprgd_dprgt}: These algorithms are adapted to the decentralized Grassmannian averaging problem by using $-\frac12 \norm[\rF]{\mU_m^\T \bar{\mU}_m^{(t)}}^2$ for the paper's $f_i$. These algorithms each have a single hyperparameter for step size $\alpha$, which was chosen for each algorithm (up to precision of 2 significant figures) by searching for the value $\alpha \in \s{10^{-4}, 10^3}$ which approximately minimized the MSE; the solutions were $\alpha$ on the order of $10^0$.

\textbf{COM (Consensus Optimization on Manifolds)} \citep{sarlette2009consensus}: This algorithm is the discrete-time variant of the continuous-time dynamics presented in Equation 20 of \citet{sarlette2009consensus}. There is a single hyperparameter for step size $\alpha$, which was chosen (up to precision of 2 significant figures) by searching for the value in $\alpha \in \s{10^{-4}, 10^3}$ which approximately minimized the MSE; the solutions were $\alpha$ on the order of $10^{-1}$.

\textbf{Gossip} \citep{gossip}: This is Algorithm 1 of \citet{gossip} where $\frac14 \sum_{m=1}^M \sum_{n \in \cN(m)} d^2\p{\s{\bar{U}_m^{(t)}}, \s{\bar{U}_n^{(t)}}}$ is used as the paper's $g$ (where $\cN(m)$ is the set of neighbors of $m$). This algorithm is a gossip algorithm, \textit{not} an average-consensus-based algorithm. As a result, to keep the comparison fair we let each agent perform a gradient step in parallel for each round of consensus the other algorithms perform. In precise terms, during each round of consensus this algorithm will select edges from the graph uniformly at random until there no longer remain any 2 neighboring agents who both have not yet been selected; each of these agents then performs a gradient step and the process repeats. There are 2 hyperparameters (w.l.o.g. $\rho=1$) for step size $a$ and $b$, which were chosen (up to precision of 2 significant figures) by searching for values in $a \in \s{10^{-4}, 10^3}, b \in \s{10^{-8}, 10^0}$ which approximately minimized the MSD; the solutions were $a$ on the order of $10^{0}$ and $b$ on the order of $10^{-4}$.

\begin{figure}[ht]
	\centering
	\begin{tabular}{cc}
		\begin{subfigure}{0.45\textwidth}
			\centering
			\includegraphics[width=\linewidth]{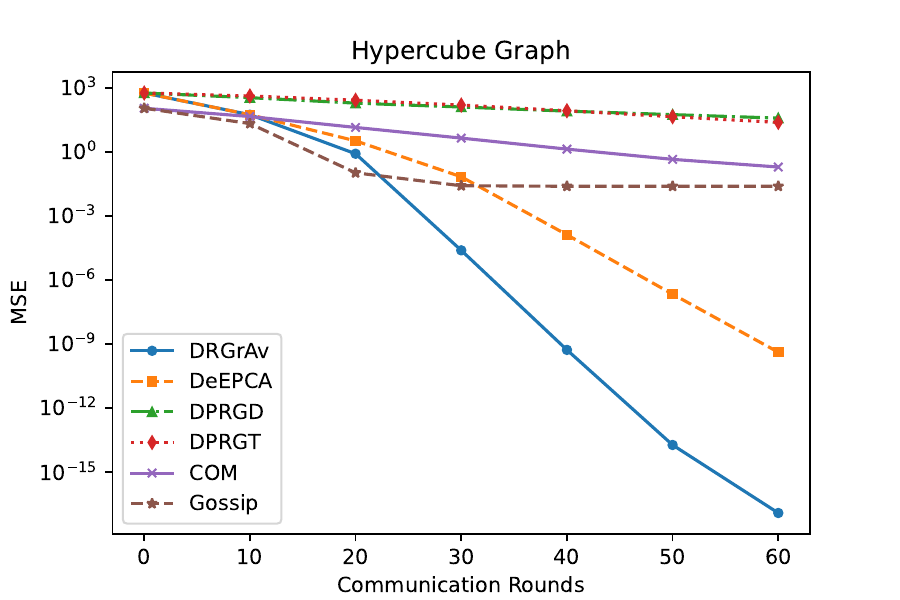}
			\caption{\sl \small (Hypercube) Mean Squared Error}
			\label{fig:hypercube_mse}
		\end{subfigure} &
		\begin{subfigure}{0.45\textwidth}
			\centering
			\includegraphics[width=\linewidth]{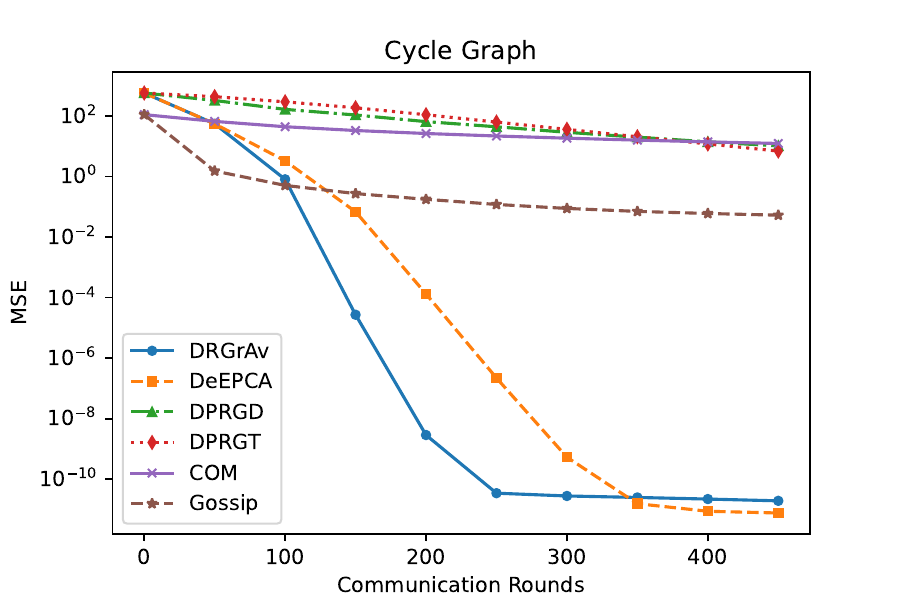}
			\caption{\sl \small (Cycle) Mean Squared Error}
			\label{fig:cycle_mse}
		\end{subfigure} \\
		\begin{subfigure}{0.45\textwidth}
			\centering
			\includegraphics[width=\linewidth]{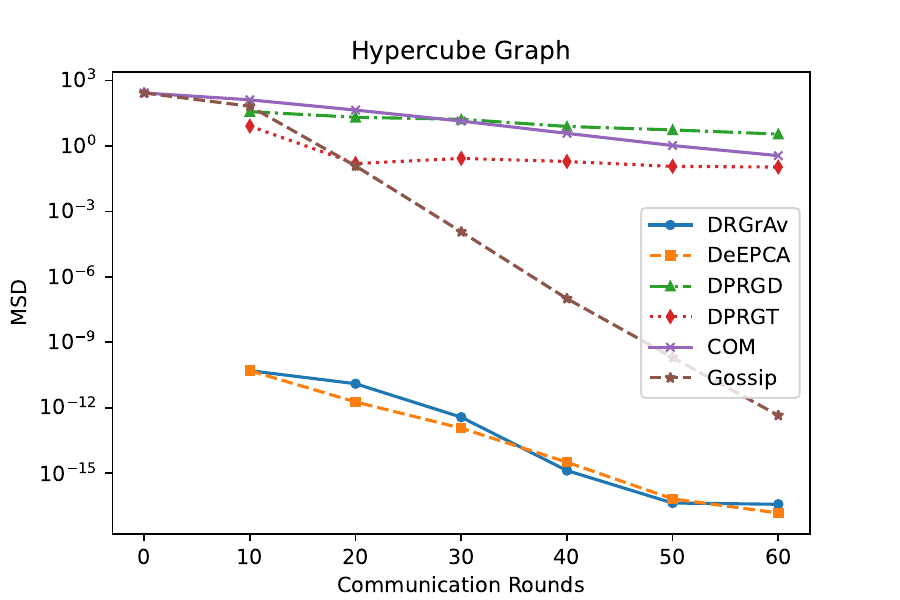}
			\caption{\sl \small (Hypercube) Mean Squared Disagreement}
			\label{fig:hypercube_msd}
		\end{subfigure} &
		\begin{subfigure}{0.45\textwidth}
			\centering
			\includegraphics[width=\linewidth]{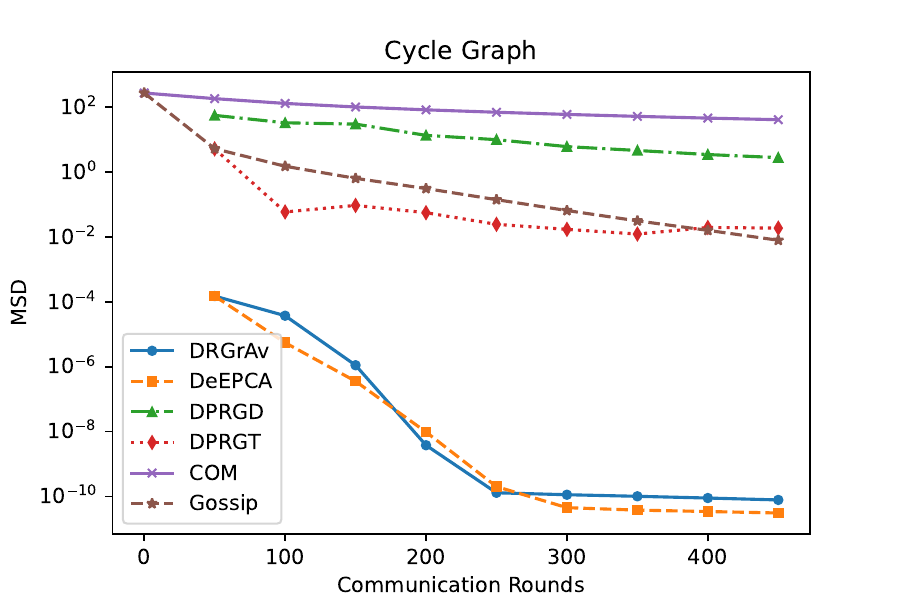}
			\caption{\sl \small (Cycle) Mean Squared Disagreement}
			\label{fig:cycle_msd}
		\end{subfigure}
	\end{tabular}
	\caption{\sl \small Plots of Mean Squared Error/Disagreement for the example decentralized Grassmannian averaging problem. DRGrAv is our proposed algorithm, DeEPCA is from \citet{deepca}, DPRGD and DPRGT are from \citet{dprgd_dprgt}, COM is from \citet{sarlette2009consensus}, and Gossip is from \citet{gossip}. The units of the x axes are communication rounds, \textit{not} algorithm iterations.}
	\label{fig:decentralized_grassmannian_averaging}
\end{figure}

\begin{table}[ht]
	\centering
	\begin{tabular}{|c|c|c|c|c|c|c|c|}
		\hline
		\textit{Time (ms) until tolerance...} & 1e-3 & 1e-6 & 1e-9 & 1e-12 & 1e-15 & \textit{Per Iter.}
		\\ \hline \hline
		\textbf{DRGrAv} {\tiny (This Paper)} & \textbf{35.4} & 47.4 & \textbf{47.4} & \textbf{56.7} & \textbf{66.1} & \textit{11.8}
		\\ \hline
		\textbf{DeEPCA} {\tiny \citet{deepca}} & 35.8 & \textbf{45.7} & 53.0 & 61.1 & 80.6 & \textit{9.13}
		\\ \hline
		\textbf{DPRGD} {\tiny \citet{dprgd_dprgt}} & 1860 & 61200 & $>$100000 & $>$100000 & $>$100000 & \textit{9.24}
		\\ \hline
		\textbf{DPRGT} {\tiny \citet{dprgd_dprgt}} & 2270 & 2910 & 3470 & 4200 & 4780 & \textit{14.5}
		\\ \hline
		\textbf{COM}* {\tiny \citet{sarlette2009consensus}} & 5050 & 7290 & 9260 & 11000 & 13200 & \textit{16.3}
		\\ \hline
		\textbf{Gossip}* {\tiny \citet{gossip}} & 1280 & 1730 & 2150 & 2590 & 3390 & \textit{427}
		\\ \hline
	\end{tabular}
	\caption{\sl \small Comparison of runtimes for various algorithms. The first five data columns display time (in milliseconds) until the MSE across agents goes below the given tolerance. COM and Gossip do not in general converge to any specific point, so their metric for tolerance is instead MSD. The minimal quantity in each column is bolded. The final column represents time per algorithm iteration. These decentralized algorithms are not truly run on separate devices, only simulated as such, so these runtimes should be interpreted broadly as general evidence that DRGrAv would perform well in a true decentralized setting.}
	\label{tab:decentralized_grassmannian_averaging}
\end{table}

In order to have a fair comparison, all average-consensus-based algorithms mentioned above used the same consensus protocol. Both graphs used the optimal Laplacian-based communication matrix (i.e. $\mW = \mId - \frac17 \mL$ for the hypercube graph, $\mW \approx \mId - \frac12 \mL$ for the cycle graph, where $\mL$ is the corresponding graph Laplacian matrix) for 10 rounds of communication in the hypercube graph case and 50 rounds of communication in the cycle graph case.

A single synthetic dataset $\c{\mU_m}_{m=1}^M$ of ``normally distributed points with standard deviation $\frac\pi4$" was used for all experiments. In precise terms, said dataset was generated by sampling a center point $\mU_{\rC}$ uniformly at random on $\Gr{N, K}$ and then computing $\mU_m := \exp_{\mU_{\rC}}\p{\mT_m}$ for all $m \in \s{M}$, where $\mT_m := \tilde{\mU}_m \tilde{\mSigma}_m \tilde{\mV}_m^\T$ is a random tangent vector at $\mU_{\rC}$ such that $\tilde{\mU}_m, \tilde{\mV}_m$ are sampled uniformly at random from sets $\c{\tilde{\mU} ~|~ \tilde{\mU} \in \St{N, K}, \mU_{\rC}^\T \tilde{\mU} = \mzero}, \St{K, K}$ respectively and $\tilde{\mSigma} := \Diag{\frac\pi4 \vz}$ where $\vz \sim \cN\p{\vzero_K, \mId_K}$ is a vector draw of $K$ i.i.d. standard normal random variables.

The \textit{Mean Squared Error} quantity at time $t$ was computed as $\frac1M \sum_{m=1}^M d^2\p{\s{\bar{\mU}_m^{(t)}}, \s{\bar{\mU}}}$ where $d$ is the extrinsic (or chordal) distance on the Grassmannian defined as $d\p{\s{\mU_1}, \s{\mU_2}} := 2^{-1/2} \norm[\rF]{\mU_1 \mU_1^\T - \mU_2 \mU_2^\T}$ and $\bar{\mU}$ is the true IAM average (see \Cref{sec:background}). Similarly, the \textit{Mean Squared Disagreement} quantity represents the amount to which the agents' estimates vary at time $t$ and was computed as $\frac2{M(M-1)} \sum_{m=1}^M \sum_{n=m+1}^M d^2\p{\s{\bar{\mU}_m^{(t)}}, \s{\bar{\mU}_n^{(t)}}}$.

The results of these experiments are shown in \Cref{fig:decentralized_grassmannian_averaging} and \Cref{tab:decentralized_grassmannian_averaging}. Six iterations were chosen for the hypercube graph case because at this point DRGrAv reaches floating point tolerance (FPtol), demonstrating that such precision is achievable by an algorithm in such time; Nine iterations were chosen for the cycle graph in order to demonstrate the effective consensus-permitted tolerance (ECPtol) of around $10^{-10}$. DRGrAv performs the best out of all algorithms, converging in the hypercube graph case to FPtol in only 6 iterations and converging in the cycle graph case to ECPtol in only 5 iterations. The adapted DeEPCA method performs most closely to DRGrAv, however still lags behind several orders of magnitude in MSE. In the cycle graph, DeEPCA manages to barely beat DRGrAv in at the end, however given both are more or less at ECPtol we do not think this provides strong evidence to prefer DeEPCA to DRGrAv. Since both COM and Gossip begin with $\bar{\mU}_m^{(0)} \gets \mU_m$ instead of some pre-agreed upon starting $\mU^{(0)}$, they are able to have superior performance to DRGrAv in the short term; however after only 2 iterations this short term behavior ends. DPRGD and DPRGT, being generic algorithms for use on any compact submanifold, do not leverage any of the structure specific to the Grassmannian problem and consequently are not empirically competitive to algorithms which do, e.g. DRGrAv. All algorithms presented have their specific ideal use cases, and we claim that the problem of decentralized Grassmannian averaging is the ideal use case of DRGrAv.

\subsection{K-Means for Video Motion Clustering}

We consider the application of the RGrAv algorithm to the problem of video motion analysis, extending the work of \citet{marrinan_finding_2014}. Their study applied centralized subspace averaging methods to multiple tasks on the DARPA Mind's Eye video dataset. In our work, we focus specifically on the task of $K$-means clustering and compare against the best algorithm presented in their work. 

The Mind's Eye dataset consists of a set of ``tracklets" --- short grayscale videos sequences of moving objects, primarily people. Each tracklet consists of 48 frames of size $32\times32$ pixels. To prepare these tracklets for subspace analysis, they are flattened into matrices $\mX_t \in \R^{1024 \times 48}$, where each column represents a vectorized frame. The subspaces $\mU_t=\rm{span}(\mX_t)$ spanned by these columns are treated as points on the Grassmannian, effectively encoding the essential motion patterns in the video.

Each tracklet is annotated with a label describing the type of motion it contains, such as ``walk" or ``ride-bike." These labels provide ground truth for evaluating the effectiveness of clustering algorithms; clusters are considered high-quality if most tracklets in the cluster share the same label. In \citet{marrinan_finding_2014}, the authors find that the choice of averaging algorithm does not significantly affect cluster quality as the number of clusters increases, whereas runtime can differ significantly. As a result, we compare to the fastest averaging algorithm from their work -- the flag mean -- and show that applying RGrAv can reduce runtime for the clustering task.

\begin{figure}[ht]
\centering
	\includegraphics[width=0.8\linewidth]{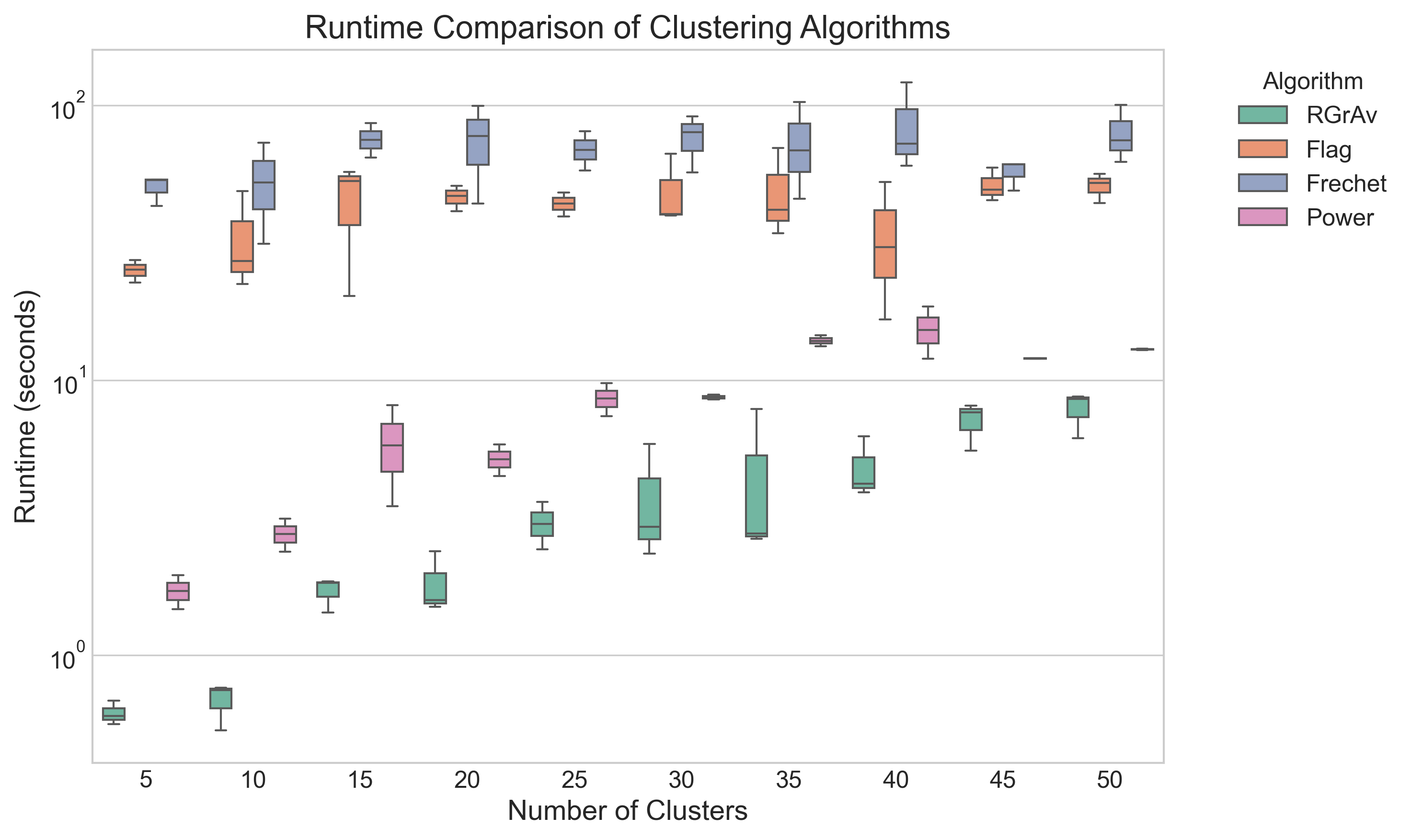}
\caption{\sl \small A comparison of runtime for K-means with various averaging algorithms and numbers of clusters K. The four colors represent the averaging algorithm as RGrAv (green), flag mean (orange), Fr\'echet mean (blue), and power method (pink). The four algorithms produce clusters with similar quality (excluded for brevity), but the RGrAv algorithm is significantly faster, showing $2\times$-$10\times$ speedup over the other averaging algorithms. }
\label{fig:runtime_purity}
\end{figure}

The standard $K$-means algorithm can be extended to cluster points on the Grassmannian by defining two primitives: a distance metric and an averaging operation. The standard $K$-means algorithm with these operations is shown in Algorithm \ref{alg:grasssmannian_kmeans}. The centers $\bar\mU_c$ are initialized randomly. At each iteration, the points $\mU_t$ in the dataset are each assigned to their closest mean using the metric to form clusters. The means are then updated to the average of their respective clusters, and the steps are repeated until the means converge.
\begin{algorithm}
	\caption{\sl Grassmannian K-Means}\label{alg:grasssmannian_kmeans}
	\textbf{Input:} Subspaces $\{\mU_t\}_{t=1}^T$; 
 Averaging algorithm: $\textrm{Ave}(\mU_1, \cdots, \mU_n)$; 
 Metric: $d(\mU_1, \mU_2)$
	\\ \textbf{Output:} Means $\{\bar\mU_c\}_{c=1}^C$
	\begin{algorithmic}
		\State $\{\bar\mU_c^{(0)}\}_{c=1}^C \leftarrow \{\textrm{rand}(\St{N, K}\}_{c=1}^C$ \Comment{Initialize}
		\While{not converged}
		\State $\{i_t\}_{t=1}^T=\{i: d(\mU_t, \bar\mU_i^{(k)}) \leq d(\mU_t, \bar\mU_j^{(k)}) \quad\forall j\}_{t=1}^T$ \Comment{Assign clusters}
		\State $\bar\mU_c^{(k+1)}=\textrm{Ave}(\{\mU_t: i_t = c  \})$ \Comment{Compute new means}
		\State converged = $\max_c d(\bar\mU_c^{(k)}, \bar\mU_c^{(k+1)}) < $ tol \Comment{Check termination}
		\State k = k + 1
		\EndWhile
	\end{algorithmic}
\end{algorithm}

For our clustering experiments, we test on the first 200 tracklets in the Mind's eye dataset, which have a total of 24 unique labels. Given the success of DeEPCA \citep{deepca} in our benchmarks as a runner-up, we use the centralized version (the block power method) in these experiments as well. We test the performance of $K$-means with four different averaging algorithms, namely RGrAv, the power method, the Fr\'echet mean, and the flag mean. The distance operation is chosen to be the chordal distance for computational efficiency.
The Fr\'echet mean is computed via iterated gradient descent on the sum of squared distances cost function. Similar to \citet{marrinan_finding_2014}, we find that the four averaging algorithms produce clusters with similar quality across various values for $K$ (these results are not shown for brevity). However as can be seen in Figure \ref{fig:runtime_purity}, the runtime varies significantly between algorithms. The Fr\'echet mean has the slowest runtime regardless of number of clusters, while RGrAv offers a $2\times$-$10\times$ speedup over the other averaging algorithms for all $K$ values.

\section{Reproducibility Statement}

\Cref{sec:decentralized_grassmannian_averaging} is intended to be a comprehensive description sufficient for reproducibility; however, in addition all experiments from this section may be reproduced by running the \texttt{scripts/decentralized\_grassmannian\_averaging.py} script in the supplemental code; precise instructions and data formatting are described in the header of this file. The $K$-means experiment can be reproduced in three steps. First is by downloading the SUMMET dataset and putting it in the right subdirectory. Simply follow instructions in \texttt{data\_sources/video\_separation.py}. Next to run the experiment, return to the base directory and run the command \texttt{python -m scripts.tracklet\_clustering} to run it as a module. Finally once this completes, there should be a \texttt{.pkl} file with the results. To create the visualization seen in this paper, run \texttt{python -m vis.visualize\_tracklet} and look in the new \texttt{plots/} subdirectory for the results.

\bibliography{main}
\bibliographystyle{iclr2025_conference}

\newpage

\appendix

\section{Additional Algorithms}

\begin{algorithm}[ht]
	\caption{\sl Finite RGrAv (Rapid Grassmannian Averaging)}\label{alg:frgrav}
	\textbf{Inputs:} $\alpha \in \intvlsp{0,1}$, $T \in \N$, $\bar{\mU}^{(0)}$, $\c{\mU_m}_{m=1}^M$
	\\ \textbf{Output:} $\bar{\mU}^{(T)}$
	\begin{algorithmic}
		\State $\mS \gets \mId_K$
		\For{$t=1,2,3,\dots,T$}
			\State $\mA_m^{(t)} \gets \mU_m \mU_m^\T \bar{\mU}^{(t-1)}$
			\State $\hat{\mA}^{(t)} \gets \frac1M \sum_{m=1}^M \mA_m^{(t)}$ \Comment{Power Iteration}
			\State $r_t \gets \text{ChebyshevRoot}\p{t, T, \alpha}$ \Comment{See \Cref{alg:chebyshev_root}}
			\State $\hat{\mZ}^{(t)} \gets \frac{1}{1 - r_t} \p{\hat{\mA}^{(t)} - r_t \bar{\mU}^{(t-1)}}$
			\If{$t$ is on the orthonormalization schedule}
				\State $\bar{\mU}^{(t)}, \mS \gets \text{StableQR}\p{\hat{\mZ}^{(t)}}$ \Comment{(Numerical Stability)}
			\Else
				\State $\bar{\mU}^{(t)} \gets \hat{\mZ}^{(t)} \mS$ \Comment{(Numerical Stability)}
			\EndIf
		\EndFor
	\end{algorithmic}
\end{algorithm}

\begin{algorithm}[ht]
	\caption{\sl Finite DRGrAv (Decentralized Rapid Grassmannian Averaging)}\label{alg:fdrgrav}
	\textbf{Input:} $\alpha \in \intvlsp{0,1}$, $T \in \N$, $\c{\bar{\mU}_m^{(0)}}_{m=1}^M$, $\c{\mU_m}_{m=1}^M$
	\\ \textbf{Output:} $\bar{\mU}^{(T)}$
	\begin{algorithmic}
		\State $\mS_m \gets \mId_K$
		\For{$t=1,2,3,\dots,T$}
			\State $\mA_m^{(t)} \gets \mU_m \mU_m^\T \bar{\mU}^{(t-1)}$ \Comment{Local Power Iteration}
			\State $r_t \gets \text{ChebyshevRoot}\p{t, T, \alpha}$ \Comment{See \Cref{alg:chebyshev_root}}
			\State $\mY_m^{(t)} \gets \frac{1}{1 - r_t} \p{\mA_m^{(t)} - r_t \bar{\mU}_m^{(t-1)}}$
			\If{$t = 1$}
				\State $\mZ_m^{(1)} \gets \mY_m^{(1)}$ \Comment{Gradient Tracking}
			\Else
				\State $\mZ_m^{(t)} \gets \hat{\mZ}_m^{(t-1)} + \mY_m^{(t)} - \mY_m^{(t-1)}$ \Comment{Gradient Tracking}
			\EndIf
			\State $\hat{\mZ}_m^{(t)} = \text{AverageConsensus}\p{\mZ_m^{(t)}}$
			\If{$t$ is on the orthonormalization schedule}
				\State $\bar{\mU}_m^{(t)}, \mS_m \gets \text{StableQR}\p{\hat{\mZ}_m^{(t)}}$ \Comment{(Numerical Stability)}
			\Else
				\State $\bar{\mU}_m^{(t)} \gets \hat{\mZ}_m^{(t)} \mS_m$ \Comment{(Numerical Stability)}
			\EndIf
		\EndFor
	\end{algorithmic}
\end{algorithm}

\begin{algorithm}[ht]
	\caption{\sl ChebyshevCoefficients}\label{alg:chebyshev_coefficients}
	\textbf{Input:} $t \geq 2$, $\alpha \in \intvlsp{0,1}$
	\\ \textbf{Output:} $a_t, b_t, c_t$
	\begin{algorithmic}
		\For{$s=t-2,t-1,t$}
			\If{$s=0$}
				\State $g_0 \gets 1$
			\Else
				\State $r_s \gets \cos\p{\frac\pi{2 s}}$
				\State $z_s \gets \frac{1 + r_s}{\alpha}$
				\State $\tau_s \gets T_s\p{z_s - r_s}$ \Comment{$T_s$ is the $s$th-order Chebyshev polynomial of the first kind}
				\State $g_s \gets \frac{\tau_s}{z_s^s}$
			\EndIf
		\EndFor
		\For{$s=t-1,t$}
			\State $a_s \gets 2 \frac{g_{s-1}}{g_s}$
			\State $q_s \gets -\frac{r_s}{z_s}$
		\EndFor
		\State $b_t \gets t q_t - (t-1) q_{t-1}$
		\If{$t = 2$}
			\State $c_t \gets 0$
		\Else
			\State $c_t \gets \frac14 a_{t-1} \p{2 t (t-1) \p{q_t - q_{t-1}}^2 - \frac{t}{z_t^2} + \frac{t-1}{z_{t-1}^2}}$
		\EndIf
	\end{algorithmic}
\end{algorithm}

\begin{algorithm}[ht]
	\caption{\sl ChebyshevRoot}\label{alg:chebyshev_root}
	\textbf{Input:} $t \in \N$, $T \in \N$, $\alpha \in \intvlsp{0,1}$
	\\ \textbf{Output:} $r_{t,T}$
	\begin{algorithmic}
		\State $r_{t,T} \gets \alpha \frac{\cos\p{\frac{\pi \p{t + 1/2}}{T}} + \cos\p{\frac{\pi}{2 T}}}{1 + \cos\p{\frac{\pi}{2 T}}}$
	\end{algorithmic}
\end{algorithm}

\section{Auxiliary Theorems}\label{sec:aux_theorems}
\begin{lemma}\label{lem:root_loc}
	Suppose $f^{\star} \in \cP_t'$ is a solution to \cref{eq:optimal_band}. Then the $f^{\star}$ has no roots in $[\alpha,1]$.
\end{lemma}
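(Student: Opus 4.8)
The plan is to argue by contradiction: assuming $f^\star$ is a minimizer of \eqref{eq:optimal_band} that has a (necessarily real) root $\rho \in [\alpha,1]$, I would exhibit a strictly better feasible polynomial, contradicting optimality. The entire argument rests on one elementary monotonicity fact about the normalized linear factor $\phi_r(\lambda) := \frac{\lambda - r}{1-r}$: for fixed $\lambda \in [0,1)$ the map $r \mapsto |\phi_r(\lambda)|$ is strictly increasing for $r \in (\lambda,1)$ and strictly decreasing for $r < \lambda$, while $\phi_r(1) = 1$ for every $r$. Both claims follow from the one-line computation $\partial_r \phi_r(\lambda) = \frac{\lambda-1}{(1-r)^2}$, so that $\partial_r |\phi_r(\lambda)| = \pm\frac{1-\lambda}{(1-r)^2}$ with the sign depending on whether $r$ is above or below $\lambda$.

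First I would dispose of the pass-band. Since feasible polynomials with finite objective exist (e.g.\ $\lambda \mapsto \lambda^t$, whose objective is $(\alpha/\beta)^t < \infty$), the optimal value is finite, hence $\min_{\lambda\in[\beta,1]}|f^\star(\lambda)| > 0$; in particular $f^\star$ has no root in $[\beta,1]$. This handles every $\rho \in [\beta,1]$ (the case $\rho=1$ being anyway excluded by $f^\star(1)=1$) and guarantees the denominator of the objective stays positive in what follows.

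It then remains to rule out a root $\rho \in [\alpha,\beta)$. Using $f^\star(0)=0$ and $\rho\neq 0$, I factor $f^\star = \phi_\rho\,G$ with $G \in \cP_{t-1}'$; relocating the root to any $\rho'$ gives $\tilde f := \phi_{\rho'}G \in \cP_t'$, again feasible, and I compare objectives factorwise via $|\tilde f| = |\phi_{\rho'}|\,|G|$ and $|f^\star| = |\phi_\rho|\,|G|$. Taking $\rho' = \rho - \epsilon$ for small $\epsilon > 0$: on the pass-band every $\lambda \in [\beta,1]$ lies to the right of both roots, so by the monotonicity $|\phi_{\rho'}(\lambda)| \ge |\phi_\rho(\lambda)|$ pointwise and the denominator does not decrease; on the stop-band, if $\rho > \alpha$ I choose $\epsilon < \rho - \alpha$ so that every $\lambda \in [0,\alpha]$ still lies strictly to the left of $\rho'$, whence $|\phi_{\rho'}(\lambda)| < |\phi_\rho(\lambda)|$ pointwise and the stop-band maximum strictly decreases. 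Together these yield a strictly smaller objective, a contradiction.

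The delicate point, and the main obstacle I expect, is the boundary root $\rho = \alpha$: here $\phi_{\alpha-\epsilon}$ changes sign across the stop-band and no longer dominates $\phi_\alpha$ near $\lambda = \alpha$. The remedy is to observe that, since $\phi_\alpha(\alpha)=0$, the stop-band maximum of $|f^\star|$ is attained on a compact subset of $[0,\alpha)$, say $[0,\alpha-\delta]$; choosing $\epsilon < \delta$ makes $|\tilde f| < |f^\star|$ throughout $[0,\alpha-\delta]$ (a strict pointwise bound that upgrades to a strict bound on the maximum by compactness and continuity), while on the sliver $(\alpha-\epsilon,\alpha]$ the factor $|\phi_{\alpha-\epsilon}|$ is $O(\epsilon)$ and hence negligible. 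The stop-band maximum again strictly decreases, the denominator still does not decrease, and the same contradiction applies. I anticipate that this upgrade from a pointwise to a maximal strict decrease, together with the bookkeeping near $\alpha$, is the only step requiring genuine care; the monotonicity computation and the feasibility/pass-band checks are routine.
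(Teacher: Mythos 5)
Your proposal is correct and takes essentially the same route as the paper's own proof of \cref{lem:root_loc}: exclude roots in $[\beta,1]$ because the denominator of the objective would vanish, relocate a root in $(\alpha,\beta)$ leftward using the fact that the factor ratio exceeds $1$ on the stop-band and is at most $1$ on the pass-band, and handle a root exactly at $\alpha$ by a small leftward perturbation with an $\epsilon$--$\delta$ argument separating a compact core of $[0,\alpha]$ (where the perturbed factor is dominated) from a sliver near $\alpha$ (where the perturbed factor is $O(\epsilon)$). The only differences are cosmetic --- you normalize each factor as $\phi_r(\lambda) = \frac{\lambda - r}{1 - r}$ so feasibility $f(1)=1$ is preserved automatically where the paper implicitly relies on scale-invariance of the objective, and you move an interior root slightly left rather than all the way to $\alpha$ --- and your one overstated step (the ``strict pointwise'' bound fails at zeros of the cofactor $G$, where both polynomials vanish) is harmless, since evaluating at a maximizer of the perturbed polynomial restores strictness of the maximum comparison exactly as in the paper.
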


\begin{proof}
	If $f^{\star}$ had a root in $[\beta, 1]$, then $\min_{\lambda \in \s{\beta, 1}} \abs{f^{\star} \p{\lambda}} = 0$ and the objective function is unbounded, so $f^{\star}$ cannot have any roots in $[\beta,1]$. This also allows us to conclude $f^{\star}$ is not the $0$ polynomial, which is used in what follows. \\ \\
	Now, we show that if $f^{\star}$ has a root in $(\alpha, \beta)$, moving this root to $\alpha$ strictly decreases the objective function's value showing that $f^{\star}$ could not have been a solution to the minization problem. \\
	If $f^{\star}$ had a root in $(\alpha, \beta)$, then we can write $f^{\star}(\lambda) = (\lambda-\lambda_1) g(\lambda)$ for some $\lambda_1 \in (\alpha, \beta)$ and $g \in \cP_{t-1}'$ where $g$ is not the $0$ polynomial. It follows that
	\begin{align*}
		\frac{\max_{\lambda \in \s{0, \alpha}} \abs{f^{\star}\p{\lambda}}}{\min_{\lambda \in \s{\beta, 1}} \abs{f^{\star}\p{\lambda}}} &= \frac{\max_{\lambda \in \s{0, \alpha}} \abs{(\lambda-\lambda_1) g(\lambda)}}{\min_{\lambda \in \s{\beta, 1}} \abs{(\lambda-\lambda_1) g(\lambda)}} \\
		&= \frac{\sup_{\lambda \in [0, \alpha)} \abs{(\lambda-\alpha) g(\lambda)} \abs{\frac{\lambda-\lambda_1}{\lambda-\alpha}}}{\min_{\lambda \in \s{\beta, 1}} \abs{(\lambda-\alpha) g(\lambda)} \abs{\frac{\lambda-\lambda_1}{\lambda-\alpha}}} \\
		&> \frac{\max_{\lambda \in \s{0, \alpha}} \abs{(\lambda-\alpha) g(\lambda)} }{\min_{\lambda \in \s{\beta, 1}} \abs{(\lambda-\alpha) g(\lambda)} } \\
	\end{align*}
	where the final line results from the fact that $\abs{\frac{\lambda-\lambda_1}{\lambda-\alpha}} < 1$ for $\lambda \in [\beta,1]$ and $\abs{\frac{\lambda-\lambda_1}{\lambda-\alpha}} > 1$ for $\lambda \in [0,\alpha)$. Now we observe that the function $h(\lambda) := (\lambda-\alpha) g(\lambda) \in \cP_t'$ achieves a strictly lower value for the objective function via moving the root $\lambda_1 \in (\alpha, \beta)$ to $\lambda = \alpha$. Thus, $f^{\star}$ would not be a solution to \cref{eq:optimal_band} and therefore $f^{\star}$ cannot have roots in $(\alpha, \beta)$. \\ \\
	Now we show that if $f^{\star}$ had a root at $\lambda=\alpha$, we could slightly move the root to some point to the left of $\alpha$ and decrease the objective function's value. \\
	Suppose $f^{\star}$ has a root at $\lambda=\alpha$. We can write $f^{\star}(\lambda) = (\lambda-\alpha)g(\lambda)$ for some $g \in \cP_{t-1}'$, where $g$ is not the $0$ polynomial. Let $\delta > 0$ be such that 
	\begin{align*}
		\max_{\lambda \in [\alpha - \frac{\delta}{2}, \alpha]} \lvert f^{\star}(\lambda) \rvert < \min_{\epsilon \in [0,\alpha]} \max_{\lambda \in [0, \alpha]} \lvert (\lambda - \epsilon) g(\lambda) \rvert
	\end{align*}
	which exists since $\min_{\epsilon \in [0,\alpha]} \max_{\lambda \in [0, \alpha]} \lvert (\lambda - \epsilon) g(\lambda) \rvert > 0$ (as $g \not\equiv 0$ from the beginning of the proof) and $f^{\star}$ is continuous. It follows that
	\begin{align*}
		\frac{\max_{\lambda \in \s{0, \alpha}} \abs{f^{\star}\p{\lambda}}}{\min_{\lambda \in \s{\beta, 1}} \abs{f^{\star}\p{\lambda}}} &= \frac{\max_{\lambda \in \s{0, \alpha}} \abs{(\lambda-\alpha) g(\lambda)}}{\min_{\lambda \in \s{\beta, 1}} \abs{(\lambda-\alpha) g(\lambda)}} \\
		&= \frac{\sup_{\lambda \in [0, \alpha] \setminus \{\alpha-\delta\}} \abs{(\lambda-(\alpha-\delta)) g(\lambda)} \abs{\frac{\lambda-\alpha}{\lambda-(\alpha-\delta)}}}{\min_{\lambda \in \s{\beta, 1}} \abs{(\lambda-(\alpha-\delta)) g(\lambda)} \abs{\frac{\lambda-\alpha}{\lambda-(\alpha-\delta)}}} \\
		&= \frac{\max\{\sup_{\lambda \in [0, \alpha - \frac{\delta}{2}) \setminus \{\alpha-\delta\}} \abs{(\lambda-(\alpha-\delta)) g(\lambda)} \abs{\frac{\lambda-\alpha}{\lambda-(\alpha-\delta)}}, \Phi\}}{\min_{\lambda \in \s{\beta, 1}} \abs{(\lambda-(\alpha-\delta)) g(\lambda)} \abs{\frac{\lambda-\alpha}{\lambda-(\alpha-\delta)}}} \\
		&> \frac{\max_{\lambda \in [0, \alpha]} \abs{(\lambda-(\alpha-\delta)) g(\lambda)}}{\min_{\lambda \in \s{\beta, 1}} \abs{(\lambda-(\alpha-\delta)) g(\lambda)}} \\
	\end{align*}
	with $\Phi = \max_{\lambda \in [\alpha - \frac{\delta}{2}, \alpha]} \abs{(\lambda-(\alpha-\delta)) g(\lambda)} \abs{\frac{\lambda-\alpha}{\lambda-(\alpha-\delta)}}$. The final line results from the fact that $\abs{\frac{\lambda-\alpha}{\lambda-(\alpha - \delta)}} < 1$ for $\lambda \in [\beta,1]$, $\abs{\frac{\lambda-\alpha}{\lambda-(\alpha-\delta)}} \geq 1$ for $\lambda \in [0,\alpha - \frac{\delta}{2})$, and since $\abs{\frac{\lambda-\alpha}{\lambda-(\alpha - \delta)}} \leq 1$ for $\lambda \in [\alpha - \frac{\delta}{2},\alpha]$,
	\begin{align*}
		\max_{\lambda \in [\alpha - \frac{\delta}{2}, \alpha]} \abs{(\lambda-(\alpha-\delta)) g(\lambda)} \abs{\frac{\lambda-\alpha}{\lambda-(\alpha-\delta)}} &< \min_{\epsilon \in [0,\alpha]} \max_{\lambda \in [0, \alpha]} \lvert (\lambda - \epsilon) g(\lambda) \rvert * 1
		\\ &\leq \max_{\lambda \in [0, \alpha]} \abs{(\lambda-(\alpha-\delta)) g(\lambda)}
	\end{align*}
	Now we observe that the function $h(\lambda) := (\lambda-(\alpha-\delta)) g(\lambda) \in \cP_t'$ achieves a strictly lower value for the objective function via moving the root from $\alpha$ to $\lambda = \alpha - \delta$. Thus, $f^{\star}$ would not be a solution to \cref{eq:optimal_band} and therefore $f^{\star}$ cannot have a root at $\lambda=\alpha$.
\end{proof}
\begin{definition}
	A polynomial $f$ is $t$-\textbf{equioscillatory} on an interval
	$\s{a, b}$ if there exists some $a \leq \gamma_0 < \gamma_1 < \dots <
	\gamma_{t-1} \leq b$ such that $\abs{f\p{\gamma_s}} = \max_{\lambda \in \s{a,
	b}} \abs{f\p{\lambda}}$ for all $0 \leq s \leq t-1$ and $f\p{\gamma_0} =
	-f\p{\gamma_1} = f\p{\gamma_2} = -f\p{\gamma_3} = \dots$.
\end{definition}

\begin{lemma}\label{lem:equi_lem}
	A solution to the minimization problem	
	\begin{align*}
		\minimize_{f_t \in \cP_t'} \frac{\max_{\lambda \in \s{0, \alpha}} \abs{f_t\p{\lambda}}}{\min_{\lambda \in \s{\beta, 1}} \abs{f_t\p{\lambda}}}
	\end{align*}
	must be $t$-equioscillatory.
\end{lemma}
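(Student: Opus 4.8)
The plan is to argue by contradiction using the classical exchange (equioscillation) technique from Chebyshev approximation theory, adapted to the ratio objective of \cref{eq:optimal_band}. Let $f^\star \in \cP_t'$ be a minimizer, and write $M := \max_{\lambda \in [0,\alpha]} |f^\star(\lambda)|$ and $D := \min_{\lambda \in [\beta,1]} |f^\star(\lambda)|$, so the optimal value is $M/D$. By \Cref{lem:root_loc}, $f^\star$ has no roots in $[\alpha,1]$; together with $f^\star(1) = 1 > 0$ this forces $f^\star > 0$ on $[\beta,1]$, hence $D > 0$ and $\min_{[\beta,1]}|f^\star| = \min_{[\beta,1]} f^\star$. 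Since $f^\star$ is a nonzero polynomial, $M > 0$, and the extremal set $S := \{\lambda \in [0,\alpha] : |f^\star(\lambda)| = M\}$ is finite (it lies in the roots of the nonzero polynomials $f^\star \mp M$). Reading the signs of $f^\star$ along $S$ from left to right and collapsing equal-sign runs, let $k$ be the number of sign alternations; then $f^\star$ is $k$-equioscillatory, and it suffices to show $k \ge t$. I would assume instead that $k \le t-1$ and derive a contradiction.

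The core of the argument is to build a correction polynomial $q$ and consider $f_\varepsilon := f^\star - \varepsilon q$ for small $\varepsilon > 0$. I would require $q$ to satisfy: (i) $q(0) = 0$; (ii) $\deg q \le t$; (iii) $\mathrm{sign}\, q = \mathrm{sign}\, f^\star$ at every point of $S$; and (iv) $q \le 0$ on $[\beta,1]$. To construct it, I would place separation points $0 < x_1 < \cdots < x_{k-1} < \alpha$, one strictly between each pair of oppositely-signed consecutive runs of $S$ at a location where $|f^\star| < M$ (which exists by the intermediate value theorem), and set $q(\lambda) = \sigma\, \lambda \prod_{i=1}^{k-1}(\lambda - x_i)$, optionally multiplied by one factor $(\xi - \lambda)$ with $\xi \in (\alpha,\beta)$. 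The factor $\lambda$ gives (i); note $0 \notin S$ because $f^\star(0) = 0$ while $M > 0$, so (iii) is unobstructed at the origin. The roots $x_i$ make $q$ flip sign exactly across each run boundary, so matching $f^\star$ on the rightmost run (by choosing $\sigma \in \{\pm 1\}$) propagates (iii) to all of $S$. The factor $(\xi - \lambda)$ is positive on $[0,\alpha]$ and negative on $[\beta,1]$, so including or excluding it flips the otherwise-forced sign of $q$ on $[\beta,1]$ without disturbing (iii), securing (iv). The degree is at most $1 + (k-1) + 1 = k+1 \le t$, giving (ii).

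With such a $q$, both ends of the objective move in our favor. On $[\beta,1]$, property (iv) gives $f_\varepsilon = f^\star - \varepsilon q \ge f^\star > 0$, so $\min_{[\beta,1]}|f_\varepsilon| \ge D$. On $[0,\alpha]$, a compactness estimate on $|f_\varepsilon|^2 = (f^\star)^2 - 2\varepsilon f^\star q + \varepsilon^2 q^2$ shows $\max_{[0,\alpha]}|f_\varepsilon| < M$ for all small $\varepsilon > 0$: near $S$ the cross term $-2\varepsilon f^\star q$ is strictly negative by (iii) and dominates the $\varepsilon^2$ term, while away from $S$ the value $|f^\star|$ is bounded away from $M$ so the $O(\varepsilon)$ perturbation cannot reach it. Finally, since $q(1) \le 0$ we have $f_\varepsilon(1) = 1 - \varepsilon q(1) > 0$, so $g := f_\varepsilon / f_\varepsilon(1) \in \cP_t'$; as the objective is invariant under positive scaling, its value at $g$ equals $\max_{[0,\alpha]}|f_\varepsilon| / \min_{[\beta,1]}|f_\varepsilon| < M/D$, contradicting optimality of $f^\star$. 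Hence $k \ge t$ and $f^\star$ is $t$-equioscillatory.

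I expect the main obstacle to be the denominator, which the ratio objective couples to the perturbation in a way the classical single-interval equioscillation proof never faces. Naively subtracting $\varepsilon q$ can shrink $\min_{[\beta,1]}|f^\star|$ at the same $O(\varepsilon)$ rate as it shrinks the numerator, so the ratio need not improve; worse, the sign of $q$ on $[\beta,1]$ is forced by the sign-matching requirement (iii) on the rightmost run. The two devices that resolve this are the extra factor $(\xi - \lambda)$, which decouples the sign of $q$ on $[\beta,1]$ from its sign on $[0,\alpha]$ at the cost of one degree (affordable precisely because $k \le t-1$), and the positive rescaling $g = f_\varepsilon/f_\varepsilon(1)$, which restores the normalization $f(1)=1$ without changing the scale-invariant objective. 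The remaining technical checks are that the degree budget never exceeds $t$ and that the compactness bound on $[0,\alpha]$ holds uniformly in $\varepsilon$.
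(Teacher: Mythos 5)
Your proof is correct and takes essentially the same route as the paper's: both are the classical equioscillation exchange argument, building a small sign-matched correction polynomial whose roots separate the alternating extremal runs in $[0,\alpha]$, with one extra linear factor (your optional $(\xi-\lambda)$ with $\xi \in (\alpha,\beta)$, the paper's $(\lambda-\beta)$ or $(\lambda-1)$) used to force the favorable sign on $[\beta,1]$ so the denominator does not decrease, followed by a uniform smallness estimate showing the max on $[0,\alpha]$ strictly drops. One minor point in your favor: your explicit rescaling $g = f_\varepsilon / f_\varepsilon(1)$ restores the constraint $f(1)=1$ via scale-invariance of the objective, a feasibility detail the paper's proof passes over when it asserts $f^{\star}+r \in \cP_t'$.
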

\begin{proof}
	Suppose $f^{\star}$ is a minimizer in to \cref{eq:optimal_band} that is not $t$-equioscillatory. First, we assume that without loss of generality, $f^{\star}(\alpha) > 0$ since if $f^{\star}(\alpha) < 0$, we could replace $f^{\star}$ with $-f^{\star}$ and it would still be a minimizer to the \cref{eq:optimal_band}, and we cannot have $f^{\star}(\alpha) = 0$ by \Cref{lem:root_loc}. \\
	Let $\{\lambda_1 = 0, \lambda_2, \cdots, \lambda_{m}\} \in [0, \alpha]$ for some $m \leq t$ denote the distinct roots of $f^{\star}$ in increasing order that lie in $[0,\alpha]$. We have $m$-intervals, $\mathcal{I} = \{I_i\}_{i=1}^{m}$ where $I_i = [\lambda_i, \lambda_{i+1}]$ and $\lambda_{m+1} = \alpha$. We have that for any $i \in [m]$, $\sgn{f^{\star}(\lambda)} = c$ for every $\lambda \in \text{int}(I_i)$ where $c \in \{-1, 1\}$. In other words, in the interior of any intervals from $\mathcal{I}$, $f^{\star}$ takes strictly all positive values or strictly all negative values. Define, for any interval $I \subseteq \mathbb{R}$ and function $g$
\begin{align*}
	\sgn{g \vert_{I}} = c
\end{align*}
where $c \in \{-1, 1\}$ is the value such that $\sgn{f^{\star}(\lambda)} = c$ for every $\lambda \in \text{int}(I)$. If such a value does not exist, $\sgn{g \vert_{I}}$ is left undefined. \\
We also define for any compact $A \subseteq \mathbb{R}$ and $g \in C(\mathbb{R})$,
\begin{align*}
	\| g \|_{A} := \max_{x \in A} \lvert g(x) \rvert
\end{align*}

Let 
\begin{align*}
	L &= \{I \in \mathcal{I}: \max_{\lambda \in I} \lvert f^{\star}(\lambda) \rvert < \| f^{\star} \|_{[0, \alpha]}\} \\
	M &= \{I \in \mathcal{I}: \max_{\lambda \in I} \lvert f^{\star}(\lambda) \rvert = \| f^{\star} \|_{[0, \alpha]}\} =  \mathcal{I} \setminus L \\
	J &= \bigcup_{I \in L} I
\end{align*}
Define $g(\lambda) = \lvert f^{\star}(\lambda) \rvert - \| f^{\star} \|_{[0, \alpha]}$ and denote $\epsilon = \min_{\lambda \in J} \lvert g(\lambda) \rvert$, i.e the minimum distance by which any point in $J$ misses one of $\pm \| f^{\star} \|_{[0, \alpha]}$. \\
Let $k = \lvert \{\lambda \in [0,\alpha]: f^{\star}(\lambda) = \| f \|_{[0,\alpha]}\} \rvert$. Note that $k < t$ by assumption. Let $M = \{M_1, \cdots, M_k\}$ be intervals listed from left to right where $i_j \in [m]$ are indices such that $M_j = I_{i_j} = [\lambda_{i_j}, \lambda_{i_j+1}]$ for $j \in [k]$ and $\lambda_{i_j} < \lambda_{i_m}$ for $1 \leq j < m \leq k$. \\
Now define 
\begin{align*}
	R = \{\lambda_{i_j}: \sgn{f^{\star} \vert_{M_j}} \neq \sgn{f^{\star} \vert_{M_{j-1}}} \text{ for some } 2 \leq j \leq k\}
\end{align*}

We have that since $k \leq t-1$ by assumption, then $\lvert R \rvert \leq k-1 \leq t-2$.

	With $R = \{r_1, \cdots, r_q\}$ for some $q \leq t - 2$, we define a polynomial $r: \mathbb{R} \to \mathbb{R}$ based on the sign of $f^{\star}$ on $M_k$. \\ 
	Case A: If $\sgn{f^{\star} \vert_{M_k}} = 1$, we define
	\begin{align*}
		r(\lambda) := c_r \lambda(\lambda-\beta) \Pi_{i=1}^q (\lambda-r_i)
	\end{align*}  
	Case B: Otherwise (when $\sgn{f^{\star} \vert_{M_k}} = -1$), define 
	\begin{align*}
		r(\lambda) := c_r \lambda(\lambda-1) \Pi_{i=1}^q (\lambda-r_i)
	\end{align*}
	In either case, $r$ be a polynomial of degree at most $t$. We now select $c_r \in \mathbb{R}$ so that $\|r(\lambda)\|_{[0,\alpha]} < \epsilon$ and set the $\sgn{c_r}$ so that $\sgn{r \vert_{M_1}} = -\sgn{f^{\star} \vert_{M_1}}$. We now show $\sgn{r \vert_{M_j}} = -\sgn{f^{\star} \vert_{M_j}}$ for every $j \in [k]$ via induction. Our base case holds via how we set $c_r$ above. \\
	Suppose inductively that $\sgn{r \vert_{M_{j-1}}} = -\sgn{f^{\star} \vert_{M_{j-1}}}$ for some $j \leq k$. Then, if $\sgn{f^{\star} \vert_{M_j}} = \sgn{f^{\star} \vert_{M_{j-1}}}$, we have that as no root was added to $r$ between these intervals and so
	\begin{align*}
		\sgn{r \vert_{M_j}} = \sgn{r \vert_{M_{j-1}}} = -\sgn{f^{\star} \vert_{M_{j-1}}}
	\end{align*}
	with the last equality by the induction hypothesis. Otherwise, if $\sgn{f^{\star} \vert_{M_j}} \neq \sgn{f^{\star} \vert_{M_{j-1}}}$, then since a root is added between them
	\begin{align*}
		\sgn{r \vert_{M_j}} = -\sgn{r \vert_{M_{j-1}}} = \sgn{f^{\star} \vert_{M_{j-1}}} = -\sgn{f^{\star} \vert_{M_j}}
	\end{align*}
	So in both cases, we have $\sgn{r \vert_{M_j}} = -\sgn{f^{\star} \vert_{M_j}}$ which closes the induction. \\
	Now we obtain that for any $j \in [k]$,
	\begin{align*}
		\| f^{\star} + r \|_{M_j} < \| f^{\star} \|_{[0,\alpha]}
	\end{align*}
	as, for $M_j \neq [\lambda_m, \alpha]$, the maximum $\| f^{\star} \|_{[0,\alpha]}$ is achieved by $\lvert f^{\star} \rvert$ in the interior of $M_j$ where $\lvert r \rvert > 0$ as $r$ has no roots in this interior and $\sgn{r \vert_{M_j}} = -\sgn{f^{\star} \vert_{M_j}}$. Additionally, one recognizes that if $M_j = [\lambda_m, \alpha]$, then $r(\lambda) < 0$ for $x \in (\lambda_m, \alpha]$ and so the above bound still holds even when the maximum is attained on the boundary of the interval, i.e when$f^{\star}(\alpha) = \| f^{\star} \|_{[0,\alpha]}$. Furthermore, we have that
	\begin{align*}
		\| f^{\star} + r \|_J &\leq \max_{\lambda \in J} \lvert f^{\star}(\lambda) \rvert + \| r \|_{[0,\alpha]} \\
		&< \max_{\lambda \in J} \lvert f^{\star}(\lambda) \rvert +	\epsilon = \| f^{\star} \|_{[0,\alpha]}
	\end{align*}
	by $\epsilon$'s definition. Now we have obtained
	\begin{align}\label{eq:max}
		\| f^{\star} + r \|_{[0,\alpha]} < \| f^{\star} \|_{[0,\alpha]}
	\end{align}
	Now we turn to bounding the denominator of the objective function using $f^{\star}$ from above. \\
	Note that $\sgn{f^{\star} \vert_{[\alpha,1]}} = 1$ as $f^{\star}(\alpha) > 0$ and $f^{\star}$ has no roots in $[\alpha,1]$ by \Cref{lem:root_loc}. We will now show that in either case, 
	\begin{align}\label{eq:min}
		\min_{\lambda \in [\beta,1]} \lvert (f^{\star} + r)(\lambda) \rvert \geq \min_{\lambda \in [\beta,1]} \lvert f^{\star}(\lambda) \rvert
	\end{align}
	\\
	Case A: Since $\sgn{f^{\star} \vert_{M_k}} = 1$, $\sgn{r \vert_{M_k}} = -1$ which gives $\sgn{r \vert_{[\lambda_{i_k},\beta]}} = -1$ as $r$ has no roots in $(\lambda_{i_k},\beta)$. Since $r$ has a simple root at $\lambda=\beta$ with no other roots greater than $\lambda=\beta$, we have that $\sgn{r \vert_{[\beta,1]}} = 1$. It immediately follows that
	\begin{align*}
		\min_{\lambda \in [\beta,1]} \lvert (f^{\star} + r)(\lambda) \rvert \geq \min_{\lambda \in [\beta,1]} \lvert f^{\star}(\lambda) \rvert
	\end{align*}
	Case B: Since $\sgn{f^{\star} \vert_{M_k}} = -1$, $\sgn{r \vert_{M_k}} = 1$ which gives $\sgn{r \vert_{[\lambda_{i_k},1]}} = 1$ as $r$ has no roots in $(\lambda_{i_k},1)$. So $\sgn{r \vert_{[\beta,1]}} = 1$ and it follows that
	\begin{align*}
		\min_{\lambda \in [\beta,1]} \lvert (f^{\star} + r)(\lambda) \rvert \geq \min_{\lambda \in [\beta,1]} \lvert f^{\star}(\lambda) \rvert
	\end{align*}
	\\
	
	Combining \cref{eq:max} and \cref{eq:min} gives
	\begin{align*}
		\frac{\|f^{\star}\|_{[0,\alpha]}}{\min_{\lambda \in [\beta,1]} \lvert f^{\star}(\lambda) \rvert} \geq \frac{\|f^{\star} + r \|_{[0,\alpha]}}{\min_{\lambda \in [\beta,1]} \lvert (f^{\star} + r)(\lambda) \rvert}
	\end{align*}
	We note that $(f^{\star} + r)(0) = 0$ and $f^{\star} + r$ is an at most $t$-degree polynomial and therefore $f^{\star} + r$ is a feasible function for the minimization problem \cref{eq:optimal_band}. Thus, $f^{\star}$ is not a solution to \cref{eq:optimal_band} as $f^{\star} + r$ is feasible and achieves a strictly smaller value for the objective function, which proves the lemma.
\end{proof}

\begin{lemma}\label{lem:cheby_soln}
	Define $\cP_t'' := \c{f_t \in \cP_t ~|~ f_t\p{0} = 0, f_t \text{ is
	$t$-equioscillatory on } \s{0, \alpha}, f_t\p{\beta} = 1}$ for $\beta > \alpha$. The problem
	\[
		\minimize_{f_t \in \cP_t''} \max_{\lambda \in \s{0, \alpha}} \abs{f_t\p{\lambda}}
	\]
	is solved by
	\begin{align*}
		f_t^\star\p{\lambda} &\phantom{:}= \prod_{s=0}^{t-1} \frac{\lambda - r_{s,t}}{\beta - r_{s,t}}
		\\ r_{s,t} &:= \alpha \frac{\cos\p{\frac{\pi \p{s + 1/2}}{t}} + \cos\p{\frac{\pi}{2 t}}}{1 + \cos\p{\frac{\pi}{2 t}}}
	\end{align*}
\end{lemma}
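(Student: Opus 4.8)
The plan is to recognize $f_t^\star$ as an affinely rescaled Chebyshev polynomial of the first kind and then invoke the classical equioscillation property of Chebyshev extremal polynomials. Write $c := \cos(\pi/(2t))$ and introduce the affine change of variables $\phi(\lambda) := \frac{(1+c)\lambda - c\alpha}{\alpha}$, which sends $\lambda = 0$ to $x = -c$ and $\lambda = \alpha$ to $x = 1$. A direct computation shows that the stated roots satisfy $r_{s,t} = \alpha \frac{x_s + c}{1+c}$ where $x_s = \cos(\pi(s+1/2)/t)$ are exactly the zeros of $T_t$; equivalently $\phi(r_{s,t}) = x_s$. Hence $T_t \circ \phi$ and $\prod_{s=0}^{t-1}(\lambda - r_{s,t})$ share the same $t$ roots and therefore agree up to a multiplicative constant, so normalizing at $\lambda = \beta$ yields the clean identity $f_t^\star(\lambda) = T_t(\phi(\lambda))/T_t(\phi(\beta))$. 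This identity is the workhorse of the whole argument.

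First I would establish feasibility, i.e. $f_t^\star \in \cP_t''$. Both $f_t^\star(0) = 0$ and $f_t^\star(\beta) = 1$ are immediate from the product form: the $s = t-1$ root is $r_{t-1,t} = 0$ (since $x_{t-1} = -c$), and the denominator normalizes the value at $\beta$. The substantive part is the $t$-equioscillation on $[0,\alpha]$. Since $\phi$ maps $[0,\alpha]$ onto $[-c,1] \subseteq [-1,1]$, where $|T_t| \le 1$ with the bound attained, it suffices to count how many Chebyshev extrema $x = \cos(k\pi/t)$, $k = 0,\dots,t$, lie in $[-c,1]$. Using $-c = \cos(\pi - \pi/(2t))$ and monotonicity of cosine, the condition $\cos(k\pi/t) \ge -c$ reduces to $k \le t - 1/2$, i.e. $k \in \{0,\dots,t-1\}$: exactly $t$ extrema, at which $T_t$ takes the alternating values $(-1)^k$. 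Pulling these back through $\phi^{-1}$ gives $t$ points $\gamma_0 < \cdots < \gamma_{t-1}$ in $[0,\alpha]$ at which $|f_t^\star|$ equals its maximum $E^\star := 1/T_t(\phi(\beta))$ with alternating signs, which is precisely $t$-equioscillation. (Here $\phi(\beta) > \phi(\alpha) = 1$ because $\beta > \alpha$ and $\phi$ is increasing, and $T_t > 1$ outside $[-1,1]$, so $T_t(\phi(\beta)) > 0$ and $E^\star > 0$.)

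Then I would prove optimality by the standard comparison-and-root-counting argument. Suppose a competitor $g \in \cP_t''$ achieved $\max_{[0,\alpha]}|g| < E^\star$, and set $h := f_t^\star - g \in \cP_t$, which is not identically zero. At each equioscillation point $\gamma_s$ we have $|g(\gamma_s)| \le \max_{[0,\alpha]}|g| < E^\star = |f_t^\star(\gamma_s)|$, so $h(\gamma_s)$ inherits the alternating sign of $f_t^\star(\gamma_s)$; the $t$ alternating signs force at least $t-1$ distinct zeros of $h$ in $(\gamma_0,\gamma_{t-1}) \subset (0,\alpha)$. In addition $h(0) = 0$ (both $f_t^\star$ and $g$ vanish at $0$, and $\gamma_0 > 0$ since $f_t^\star(0)=0 \neq \pm E^\star$) and $h(\beta) = 0$ (both equal $1$ at $\beta$, and $\beta > \alpha \ge \gamma_{t-1}$); these two zeros are distinct from the interior ones. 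This gives at least $t+1$ distinct roots of a nonzero polynomial of degree at most $t$, a contradiction. Hence no feasible $g$ beats $E^\star$, and $f_t^\star$ is optimal.

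The main obstacle is the feasibility/equioscillation step: correctly pinning down the affine map $\phi$, verifying the trigonometric fact that the stated $r_{s,t}$ are the pulled-back Chebyshev zeros (in particular that $\phi(0) = -\cos(\pi/(2t))$ is itself a zero of $T_t$, matching $r_{t-1,t}=0$), and carefully counting that exactly $t$ of the $t+1$ extrema of $T_t$ survive inside $[-c,1]$ so the equioscillation has the required length $t$. Once the identity $f_t^\star = (T_t \circ \phi)/T_t(\phi(\beta))$ is in hand, both feasibility and the optimality argument are routine.
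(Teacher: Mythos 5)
Your proof is correct and follows essentially the same route as the paper: verify feasibility of $f_t^\star$ (with $r_{t-1,t}=0$ giving $f_t^\star(0)=0$, normalization at $\beta$, and $t$-equioscillation on $[0,\alpha]$), then rule out any better competitor $g$ by showing $h = f_t^\star - g$ alternates sign at the $t$ extremal points and vanishes at $0$ and $\beta$, forcing $t+1$ distinct roots of a degree-$t$ polynomial. The only difference is one of detail, not of method: where the paper simply asserts the equioscillation ``as a variant of a Chebyshev polynomial'' with the stated extrema, you explicitly construct the affine map $\phi$, identify $f_t^\star = T_t\circ\phi / T_t(\phi(\beta))$, and count which Chebyshev extrema land in $[-\cos(\pi/(2t)),1]$ --- a welcome elaboration of the step the paper leaves implicit.
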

\begin{proof}
	First note that $f_t^\star$ is indeed feasible; from inspection one
	realizes $f_t^\star\p{\beta} = 1$ and $r_{t-1,t} = 0$ (consequently
	$f_t^\star\p{0} = 0$) and as a variant of a Chebyshev polynomial (of the
	first kind), $f_t^\star$ is $t$-equioscillatory on $\s{0, \alpha}$ with extremal points
	\[
		\gamma_s = \alpha \frac{\cos\p{\frac{\pi s}{t}} + \cos\p{\frac\pi{2 t}}}{1 + \cos\p{\frac\pi{2 t}}}, \quad 0 \leq s \leq t-1
	\]
	Assume for the sake of contradiction that there exists some $g \in \cP_t''$
	such that $\max_{\lambda \in \s{0, \alpha}} \abs{g\p{\lambda}} <
	\max_{\lambda \in \s{0, \alpha}} \abs{f_t^\star\p{\lambda}}$. This would
	then imply that the difference polynomial $h\p{\lambda} :=
	f_t^\star\p{\lambda} - g\p{\lambda}$ satisfies the following
	\begin{align*}
		\forall s = 0, 2, \dots ~: \quad h\p{\gamma_s} &= f_t^\star\p{\gamma_s} - g\p{\gamma_s}
		\\ &> f_t^\star\p{\gamma_s} - \max_{\lambda \in \s{0, \alpha}} \abs{f_t^\star\p{\lambda}}
		\\ &= 0
		\\ \forall s = 1, 3, \dots ~: \quad h\p{\gamma_s} &= f_t^\star\p{\gamma_s} - g\p{\gamma_s}
		\\ &< f_t^\star\p{\gamma_s} + \max_{\lambda \in \s{0, \alpha}} \abs{f_t^\star\p{\lambda}}
		\\ &= 0
		\\ h\p{0} &= f_t^\star\p{0} - g\p{0}
		\\ &= 0
		\\ h\p{\beta} &= f_t^\star\p{\beta} - g\p{\beta}
		\\ &= 0
	\end{align*}
	Since $h$ changes sign on every $\gamma_s$, it must have at least one root
	in each of the $t-1$ intervals between consecutive $\gamma_s$. By
	construction, $h$ also has a 2 more roots at $0$ and $\beta$, meaning $h$
	has at minimum $t+1$ distinct roots. However, $h$ is a $t$th order
	polynomial, leading to a contradiction.
\end{proof}

\subsection{Proof of \Cref{thm:main}}\label{apx:thm_main}
Proof. First, by \Cref{lem:equi_lem}, since any minimizer $f^{\star}$ of \cref{eq:optimal_band} is $t$-equiosciallatory, all $t$ of $f^{\star}$'s roots lie in $[0,\alpha]$. This implies that $f^{\star}$ is increasing in $[\beta,1]$ and therefore we have
\[
	\min_{\lambda \in [\beta,1]} \lvert f^{\star} (\lambda) \rvert = f^{\star}(\beta)
\]
which is assumed to be positive without loss of generality also as in \Cref{lem:equi_lem}. Now we can scale $f^{\star}$ so that $f(\beta) = 1$ without changing the value of the objective function, i.e 
\begin{align*}
	\frac{\max_{\lambda \in \s{0, \alpha}} \abs{f^{\star}\p{\lambda}}}{ f^{\star}(\beta)} = \frac{\max_{\lambda \in \s{0, \alpha}} \abs{ C f^{\star}\p{\lambda}}}{ C f^{\star}(\beta)} = \max_{\lambda \in \s{0, \alpha}} \abs{ C f^{\star}\p{\lambda}}
\end{align*}
where $C = \frac{1}{f^{\star}(\beta)}$. We have now transformed the problem into 
\[
		\minimize_{f_t \in \cP_t''} \max_{\lambda \in \s{0, \alpha}} \abs{f_t\p{\lambda}}
\]
 as in \Cref{lem:cheby_soln} which is solved by 
\begin{align*}
		f_t^\star\p{\lambda} &\phantom{:}= \prod_{s=0}^{t-1} \frac{\lambda - r_{s,t}}{\beta - r_{s,t}}
		\\ r_{s,t} &:= \alpha \frac{\cos\p{\frac{\pi \p{s + 1/2}}{t}} + \cos\p{\frac{\pi}{2 t}}}{1 + \cos\p{\frac{\pi}{2 t}}}
\end{align*}
 as desired.
\end{document}